\documentclass[a4paper,11pt,reqno]{amsart}
\usepackage[utf8]{inputenc}
\usepackage{amsmath}
\usepackage{amssymb}
\usepackage{amsthm}
\usepackage{mathtools}
\usepackage{hyperref}
\usepackage{color}
\usepackage{enumitem}
\usepackage{a4wide}
\usepackage{mathdots}

\DeclareMathOperator{\GL}{GL}
\DeclareMathOperator{\gl}{\mathfrak{gl}}
\DeclareMathOperator{\PGL}{PGL}
\DeclareMathOperator{\SL}{SL}

\DeclareMathOperator{\upO}{O}

\newcommand{\fraka}{\mathfrak{a}}

\newcommand{\frakg}{\mathfrak{g}}
\newcommand{\frakh}{\mathfrak{h}}

\newcommand{\frakn}{\mathfrak{n}}

\newcommand{\CC}{\mathbb{C}}

\newcommand{\PP}{\mathbb{P}}
\newcommand{\RR}{\mathbb{R}}
\newcommand{\ZZ}{\mathbb{Z}}

\newcommand{\calD}{\mathcal{D}}

\newcommand{\calW}{\mathcal{W}}

\newcommand{\0}{\textbf{0}}
\newcommand{\1}{{\rm\bf 1}}

\DeclareMathOperator{\Ind}{Ind}

\DeclareMathOperator{\Hom}{Hom}

\DeclareMathOperator{\diag}{diag}

\renewcommand\Re{\operatorname{Re}}

\renewcommand{\mod}{{\rm mod}}
\newcommand{\aut}{{\rm aut}}

\makeatletter
\newcommand{\subalign}[1]{
  \vcenter{
    \Let@ \restore@math@cr \default@tag
    \baselineskip\fontdimen10 \scriptfont\tw@
    \advance\baselineskip\fontdimen12 \scriptfont\tw@
    \lineskip\thr@@\fontdimen8 \scriptfont\thr@@
    \lineskiplimit\lineskip
    \ialign{\hfil$\m@th\scriptstyle##$&$\m@th\scriptstyle{}##$\crcr
      #1\crcr
    }
  }
}
\makeatother

\theoremstyle{plain}
\newtheorem{theorem}{Theorem}[section]
\newtheorem*{theorem*}{Theorem}
\newtheorem{proposition}[theorem]{Proposition}
\newtheorem{lemma}[theorem]{Lemma}
\newtheorem{corollary}[theorem]{Corollary}
\newtheorem{conjecture}[theorem]{Conjecture}

\newtheorem*{fact*}{Fact}

\theoremstyle{definition}

\newtheorem{remark}[theorem]{Remark}

\numberwithin{equation}{section}

\title[Rankin--Selberg periods for spherical principal series]{Rankin--Selberg periods for spherical principal series}

\author{Jan Frahm}
\address{Department of Mathematics, Aarhus University, Ny Munkegade 118, 8000 Aarhus C, Denmark}
\email{frahm@math.au.dk}

\author{Feng Su}
\address{Department of Pure Mathematics, Xi'an Jiaotong--Liverpool University, 111 Ren'ai Road, Suzhou Industrial Park, Suzhou 215123, China}
\email{feng.su@xjtlu.edu.cn}

\begin{document}

\subjclass[2010]{Primary 11F70; Secondary 22E46, 53C35}

\keywords{Rankin--Selberg $L$-function, period integral, principal series representation}

\maketitle

\begin{abstract}
By the unfolding method, Rankin--Selberg $L$-functions for $\GL(n)\times\GL(n')$ can be expressed in terms of period integrals. These period integrals actually define invariant forms on tensor products of the relevant automorphic representations. By the multiplicity-one theorems due to Sun--Zhu and Chen--Sun such invariant forms are unique up to scalar multiples and can therefore be related to invariant forms on equivalent principal series representations. We construct meromorphic families of such invariant forms for spherical principal series representations of $\GL(n,\RR)$ and conjecture that their special values at the spherical vectors agree in absolute value with the archimedean local $L$-factors of the corresponding $L$-functions. We verify this conjecture in several cases.\\
This work can be viewed as the first of two steps in a technique due to Bernstein--Reznikov for estimating $L$-functions using their period integral expressions.
\end{abstract}

\section*{Introduction}

To a pair of automorphic forms $f$ on $\GL(n)$ and $g$ on $\GL(n')$ one can associate the Rankin--Selberg $L$-function $L(s,f\times\overline{g})$, which is given by a Dirichlet series involving the Fourier--Whittaker coefficients of $f$ and $g$. Rankin--Selberg $L$-functions generalize the standard Godement--Jacquet $L$-function and are holomorphic/meromorphic functions of $s\in\CC$ satisfying an explicit functional equation. One of the fundamental problems concerning the analytic aspects of $L$-functions is to bound $L(s,f\times\overline{g})$ along the critical line $\frac{1}{2}+i\RR$ in terms of $s$ and/or the Langlands parameters of $f$ and $g$. Using the functional equation and the Phragm\'{e}n--Lindel\"{o}f convexity principle one obtains the so-called \emph{convexity bound} for $L(s,f\times\overline{g})$ on the critical line. Any bound improving the convexity bound is called a \emph{subconvexity bound}.

Subconvexity bounds for Rankin--Selberg $L$-functions have been obtained in various aspects (see e.g. \cite{Blo12,BBM17,Li11,LY02,MSY18,Mun15,You11,You13} and references therein), but so far mostly for $n,n'\leq3$. The methods used involve deep techniques from analytic number theory such as trace formulas, the circle method or the delta method, and often require delicate analytical tools such as stationary phase approximation. After this paper was finished, certain subconvexity bounds for the case $n'=n-1$ were obtained by P.~Nelson~\cite{Nel20} using different methods.

There is another way of approaching Rankin--Selberg $L$-functions, that is through period integrals. Integrating the automorphic forms over a certain locally symmetric space, involving an Eisenstein series in the case $n'=n$, defines a period integral $\Lambda(s,f\times\overline{g})$. By the unfolding method, this period integral $\Lambda(s,f\times\overline{g})$ essentially reduces to the product of an archimedean local $L$-factor $G_{\lambda,\nu}(s)$, which only depends on the Langlands parameters $\lambda$ of $f$ and $\nu$ of $\overline{g}$, and the Rankin--Selberg $L$-function $L(s,f\times\overline{g})$. This makes it possible to bound Rankin--Selberg $L$-functions by bounding the corresponding period integrals $\Lambda(s,f\times\overline{g})$.

In a series of seminal papers \cite{BR99,BR04,BR05,BR10,Rez08}, Bernstein and Reznikov were able to obtain subconvexity bounds for triple product $L$-functions of $\PGL(2)$ by bounding the corresponding period integrals. Their method involves representation theory of the group $\PGL(2,\RR)$; more precisely they study in detail invariant trilinear functionals on products of the corresponding automorphic representations. The key ingredient is the multiplicity one property, which asserts that invariant trilinear functionals on products of irreducible representations of $\PGL(2,\RR)$ are unique up to scalar multiples. They proceed essentially in two steps:
\begin{enumerate}[label=(\Roman*)]
\item Explicit invariant trilinear functionals for principal series representations are constructed and evaluated at the spherical vectors. These explicit functionals are by the multiplicity one property proportional to the functionals given by the period integrals.
\item The proportionality scalar is bounded above by different methods (analytic continuation of representations, bound for $L^4$-norms of $K$-types, estimates for Hermitian forms on automorphic representations).
\end{enumerate}

In this paper we study step (I) in the framework of Rankin--Selberg $L$-functions. In the same way as for $\PGL(2)$, the period integrals $\Lambda(s,f\times\overline{g})$ related to Rankin--Selberg $L$-functions for $\GL(n)\times\GL(n')$ give rise to invariant functionals on the product of the automorphic representations corresponding to $f$ and $\overline{g}$. The multiplicity one property in this framework was established by Sun--Zhu \cite{SZ12} for $n'=n-1$ and by Chen--Sun \cite{CS15} for general $1\leq n'\leq n$ and it allows one to relate these invariant functionals to explicit invariant functionals on principal series representations. Our main results are:

\begin{itemize}
\item The construction of explicit invariant functionals on tensor products of spherical principal series representations in terms of their integral kernels (see Proposition \ref{prop:ModelFormsEqual} for the case $n'=n$ and Proposition \ref{prop:ModelFormsUnequal} for the case $1\leq n'\leq n-1$).
\item The conjecture that the special value of our invariant functionals at the spherical vectors equals in absolute value the local $L$-factor $G_{\lambda,\nu}(s)$ up to a constant (see Conjecture \ref{conj:MatchingGfunctionModelPeriod}).
\item Verification of our conjecture for $(n,n')=(2,2)$, $(2,1)$, $(3,2)$ and $(3,1)$ (see Sections \ref{sec:GL2} and \ref{sec:GL3}).
\end{itemize}

The construction and study of invariant functionals on principal series is itself an interesting topic in representation theory, which has recently received much attention in the framework of branching problems (see e.g. \cite{KS15,KS18,Moe17} and references therein). More precisely, the Gan--Gross--Prasad conjectures can be viewed as a generalization of Rankin--Selberg theory. For instance, the corresponding model periods for rank one orthogonal groups have recently been studied by Kobayashi--Speh \cite{KS19}.

To use our results in order to obtain bounds for the corresponding Rankin--Selberg $L$-functions, one needs to carry out step (II) as well. We hope to return to this in a subsequent paper. At this moment it is not clear to us, which of the different methods in \cite{BR99,BR04,BR10} to estimate the proportionality scalars generalizes to our situation. The most straightforward technique seems to be estimating Hermitian forms on automorphic representations (see e.g. \cite{BR04}), but it requires that the quotient $\Gamma\backslash G$ is compact which is not the case for $\Gamma=\SL(n,\ZZ)$.

We remark that there is a much simpler way of constructing explicit invariant functionals on tensor products of generic representations of general linear groups using Whittaker models. More precisely, a generic representation $\pi$ of $\GL(n,\RR)$ admits a realization on a subspace
$$ \calW(\pi,\psi)\subseteq\{W\in C^\infty(\GL(n,\RR)):W(ng)=\psi(n)W(g)\,\forall\,g\in\GL(n,\RR),n\in N_n\}, $$
where $\psi$ is a non-degenerate character of the subgroup $N_n\subseteq\GL(n,\RR)$ of unipotent upper triangular matrices. If $\calW(\tau,\overline{\psi})$ denotes the corresponding model for a generic representation $\tau$ of $\GL(n',\RR)$, embedded in $\GL(n,\RR)$ as the upper left corner, where $\overline{\psi}$ is the restriction of the complex conjugate character to the maximal unipotent subgroup $N_{n'}=N_n\cap\GL(n',\RR)$, then
$$ \calW(\pi,\psi)\widehat{\otimes}\calW'(\tau,\overline{\psi})\to\CC, \quad W\otimes W'\mapsto\int_{N_{n'}\backslash\GL(n',\RR)}W\begin{pmatrix}h&\\&\1_{n-n'}\end{pmatrix}W'(h)|\det(h)|^{s-\frac{n-n'}{2}}\,dh $$
defines an invariant functional in the case $n'<n$, and a similar construction can be carried out for $n=n'$. (Here $\widehat{\otimes}$ denotes the completed projective tensor product.) The method of Bernstein--Reznikov relies on an explicit description of the group action, the representation space and the invariant inner product in order to construct test vectors and estimate their invariant norms when acted upon by group elements close to the identity element. In the Whittaker model $\calW(\pi,\psi)$ the group action is the right regular representation and hence very explicit. However, the invariant inner product and the explicit description of smooth vectors in the representation space are only accessible when restricting $W\in\calW(\pi,\psi)$ to $\GL(n-1,\RR)$, also referred to as the Kirillov model. In the Kirillov model functions with compact support modulo $N_{n-1}$ are smooth vectors and the invariant inner product is given by integration over $N_{n-1}\backslash\GL(n-1,\RR)$, so both the representation space and the invariant inner product are somehow explicit. But it is non-trivial to recover $W$ from its restriction to $\GL(n-1,\RR)$ and hence find an expression for the group action. This is the reason we are interested in invariant functionals on tensor products of principal series whose representation space is explicitly given as a space of sections of a vector bundle over the flag variety, the group action being given by the right regular representation and the invariant inner product simply being an $L^2$-inner product.

\subsection*{Structure of the paper} In Section \ref{sec:RSLfcts} we recall the definition of Rankin--Selberg $L$-functions and convolutions, including known results about the archimedean local $L$-factors. Section \ref{sec:AutRSperiods} is about interpreting the period integrals as special values of invariant forms on automorphic representations. In Section \ref{sec:ModRSperiods} we construct explicit invariant forms on tensor products of principal series and conjecture their values at the spherical vectors. These values are explicitly computed for $(n,n')=(2,2)$ and $(2,1)$ in Section \ref{sec:GL2} and for $(n,n')=(3,2)$ and $(3,1)$ in Section \ref{sec:GL3}. Finally, in the Appendix \ref{app:IntFormulas} we collect the integral formulas needed in Section \ref{sec:GL2} and \ref{sec:GL3} to evaluate the relevant integrals.

\subsection*{Acknowledgments} We thank Binyong Sun for sharing his insights and ideas on invariant functionals and for bringing the paper \cite{CS15} to our attention. We are also grateful to the referee for several helpful suggestions and remarks. The first author was supported by a research grant from the Villum Foundation (Grant No.~00025373). The second author was partly supported by the National Natural Science Foundation of China (No.~11901466) and the XJTLU Research Development Funding (RDF-19-02-04).

\section{Rankin--Selberg $L$-functions and period integrals}\label{sec:RSLfcts}

We recall the necessary facts about Maass forms on $\GL(n,\RR)$, their Fourier--Whittaker expansions and Rankin--Selberg $L$-functions, including the relation between $L$-functions and period integrals provided by the unfolding method. For this we mostly follow \cite{Gol06}.

\subsection{Maass forms on $\GL(n,\RR)$}\label{sec:MaassForms}

Let $G=\GL(n,\RR)$ ($n\geq2$), $Z(G)=\RR^\times$ its center and $K_G=\upO(n)$ the standard maximal compact subgroup. We fix the lattice $\Gamma_G=\SL(n,\ZZ)$. Further, let $\frakg=\gl(n,\RR)$ denote the Lie algebra of $G$, $U(\frakg)$ the universal enveloping algebra of its complexification $\frakg_\CC$ and $Z(\frakg)$ the center of $U(\frakg)$. Then $G$ acts unitarily on $L^2(\Gamma_G\cdot Z(G)\backslash G)$ by right-translation and this induces an action of $U(\frakg)$ on $C^\infty(\Gamma_G\cdot Z(G)\backslash G)$ by differential operators.

A function $f\in C^\infty(\Gamma_G\cdot Z(G)\backslash G)$ is called a {\it Maass form} if it has the following properties:
\begin{enumerate}
\item\label{label:DefMaassForm1}
$f$ is $K_G$-invariant,
\item\label{label:DefMaassForm2}
$f$ is a joint eigenfunction of $Z(\frakg)$, i.e. $Df=\lambda_Df$,~$\forall\,D\in Z(\frakg)$ for some scalars $\lambda_D\in\CC$,
\item\label{label:DefMaassForm3}
$f\in L^2(\Gamma_G\cdot Z(G)\backslash G)$.
\end{enumerate}
A Maass form $f$ is furthermore called {\it cusp form} if it decays rapidly at the cusp of the locally symmetric space $\Gamma_G\cdot Z(G)\backslash G/K_G$.

In view of property \eqref{label:DefMaassForm1}, Maass forms can also be viewed as $\Gamma_G$-invariant smooth functions on the semisimple Riemannian symmetric space $G/K_G\cdot Z(G)$, which can be identified with the generalized upper half plane. For this let $A_G$ denote the subgroup of diagonal matrices with positive diagonal entries and $N_G$ the unipotent subgroup of upper triangular matrices. In view of the Iwasawa decomposition $G=N_GA_GK_G$, the quotient $G/K_G\cdot Z(G)=\GL(n,\RR)/\upO(n)\cdot\RR^\times$ can be identified with the \textit{generalized upper half plane} $\frakh^n$, which is defined to be the set of all products $z=xy$ with
\begin{equation}
 x = \begin{pmatrix}1&x_{1,2}&\cdots&\cdots&x_{1,n}\\&1&x_{2,2}&\cdots&x_{2,n}\\&&\ddots&&\vdots\\&&&1&x_{n-1,n}\\&&&&1\end{pmatrix}, \qquad y = \begin{pmatrix}y_1y_2\cdots y_{n-1}&&&&\\&\ddots&&&\\&&y_1y_2&&\\&&&y_1&\\&&&&1\\\end{pmatrix},\label{eq:DefinitionXY}
\end{equation}
where $x_{ij}\in\RR$ and $y_i\in\RR_+$. For $\alpha=(\alpha_1,\ldots,\alpha_{n-1})\in\CC^{n-1}$, the function $I_\alpha$ on $\frakh^n$ given by
\begin{equation}
 I_\alpha(z) = \prod_{i=1}^{n-1}\prod_{j=1}^{n-1} y_i^{b_{ij}\alpha_j},\label{eq:DefinitionIdelta}
\end{equation}
where
$$ b_{ij} = \begin{cases}ij&\mbox{for $i+j\leq n$,}\\(n-i)(n-j)&\mbox{otherwise,}\end{cases} $$
is a joint eigenfunction of all differential operators in $Z(\frakg)$. A Maass form $f$ is said to be of type $\alpha$ if its eigenvalues $\lambda_D$, $D\in Z(\frakg)$, as defined in \eqref{label:DefMaassForm2}, coincide with the eigenvalues of $I_\alpha(z)$, i.e.
$$ DI_\alpha = \lambda_D I_\alpha,\quad Df=\lambda_Df, \qquad \forall\,D\in Z(\frakg). $$
Note that if $f$ is a Maass form of type $\alpha$, then its complex conjugate $\overline{f}$ is a Maass form of type $\overline{\alpha}$.

\subsection{Fourier--Whittaker expansion}\label{sec:FWexpansion}

We recall the expansion of Maass forms for $\GL(n,\RR)$ into Whittaker functions (see e.g. \cite[Chapters 5 and 9]{Gol06}).

For ${\bf m}=(m_1,\ldots,m_{n-1})\in\ZZ^{n-1}$ let $\psi_{\bf m}$ denote the unitary character of $N_G$ given by
$$ \psi_{\bf m}(x) = e^{2\pi\sqrt{-1}(m_1x_{n-1,n}+\cdots+m_{n-1}x_{1,2})}, \qquad x\in N_G. $$
For ${\bf m}=(1,\ldots,1)$ we will write $\psi=\psi_{1,\ldots,1}$ for short. Then Jacquet's Whittaker function of parameter $\alpha\in\CC^{n-1}$ is defined for $\Re\alpha_i>\frac{1}{n}$ ($i=1,\ldots,n-1$) by the convergent integral
\begin{equation}
 W_J(z;\alpha,\psi_{\bf m}) = \int_{N_G}I_\alpha(wuz)\overline{\psi_{\bf m}(u)}\,du, \qquad z\in\frakh^n,\label{eq:DefinitionWhittaker}
\end{equation}
where $du=\prod_{1\leq i<j\leq n}du_{i,j}$ is a suitably normalized Haar measure on $N_G$ and $w$ the longest Weyl group element with entries $w_{i,j}=\delta_{i,n-j+1}$, extended holomorphically to $\alpha\in\CC^{n-1}$.

The Fourier--Whittaker expansion of a Maass form $f$ of type $\alpha$ is of the form
\begin{multline}
 f(z) = \sum_{\small\substack{\gamma\in\\N_G\cap\SL(n-1,\ZZ)\backslash\SL(n-1,\ZZ)}}\sum_{\small\substack{m_1,\ldots,m_{n-2}\geq 1\\m_{n-1}\neq 0}}\frac{A_f(m_1,\ldots,m_{n-1})}{\prod_{k=1}^{n-1}|m_k|^{\frac{k(n-k)}{2}}}\\
 \times W_J\left(\begin{pmatrix}m_1\cdots m_{n-2}|m_{n-1}|&&&&\\&\ddots&&&\\&&m_1m_2&&\\&&&m_1&\\&&&&1\end{pmatrix}\begin{pmatrix}\gamma&\\&1\end{pmatrix}z;\alpha,\psi_{1,\ldots,1,\frac{m_{n-1}}{|m_{n-1}|}}\right).\label{eq:FourierExpansion}
\end{multline}
with Fourier coefficients $A_f(m_1,\ldots,m_{n-1})\in\CC$.

Since the full ring of Hecke operators commutes with the ring of invariant differential operators on $\Gamma_G\cdot Z(G)\backslash G$, we can further decompose Maass forms into eigenfunctions of all Hecke operators (see \cite[Chapter 9]{Gol06} for details). Maass forms which additionally are Hecke eigenfunctions are called Hecke--Maass forms and they satisfy $f=0$ if and only if $A_f(1,\ldots,1)=0$. Assuming $f\neq0$, we can therefore normalize
$$ \tilde{A}_f(m_1,\ldots,m_{n-1})=\frac{A_f(m_1,\ldots,m_{n-1})}{A_f(1,\ldots,1)}, $$
then $\tilde{A}_f(m,1,\ldots,1)$ are precisely the eigenvalues of the Hecke operators and all other Fourier coefficients $\tilde{A}_f(m_1,\ldots,m_{n-1})$ are uniquely determined by the Hecke eigenvalues $\tilde{A}_f(m,1,\ldots,1)$. From this viewpoint, it might be more natural to study Hecke eigenvalues, but for our purpose the Fourier expansion \eqref{eq:FourierExpansion} is more central, because it allows to unfold the period integral to the $L$-function. We therefore do not use Hecke operators in what follows.

\subsection{Rankin--Selberg $L$-functions}\label{RS}

Fix $1\leq n'\leq n$ and let $f$ be a Hecke--Maass form on $\GL(n,\RR)$ and $g$ be a Hecke--Maass form on $\GL(n',\RR)$ with normalized Fourier coefficients $\tilde{A}_f(m_1,\ldots,m_{n-1})$ and $\tilde{A}_g(m_1,\ldots,m_{n'-1})$, respectively. The Rankin--Selberg $L$-function of $f\times\overline{g}$ is for $n'=n$ defined by 
$$ L(s,f\times\overline{g}) = \zeta(ns)\sum_{m_1,\ldots,m_{n-1}=1}^\infty \frac{\tilde{A}_f(m_1,\ldots,m_{n-1})\overline{\tilde{A}_g(m_1,\ldots,m_{n-1})}}{(m_1^{n-1}m_2^{n-2}\cdots m_{n-1})^s} $$
and for $1\leq n'\leq n-1$ by
$$ L(s,f\times\overline{g}) = \sum_{m_1,\ldots,m_{n'}=1}^\infty \frac{\tilde{A}_f(m_1,\ldots,m_{n'},1,\ldots,1)\overline{\tilde{A}_g(m_2,\ldots,m_{n'})}}{(m_1^{n'}m_2^{n'-1}\cdots m_{n'})^s}. $$
These sums converge for sufficiently large $\Re(s)$ and $L(s,f\times\overline{g})$ has a holomorphic continuation to $s\in\CC$ except in the case $n'=n$ where there might occur a simple pole at $s=1$ (see \cite[Theorem 12.1.4]{Gol06} for details).

In the special case $n'=1$ the Rankin--Selberg $L$-function reduces to the standard Godement--Jacquet $L$-function of $f$, which is defined by
$$ L(s,f) = \sum_{m=1}^\infty\frac{\tilde{A}_f(1,\ldots,1,m)}{m^s}; $$
more precisely, $L(s,f\times 1)=\overline{L(\overline{s},f)}$ since $A(1,\ldots,1,m)=\overline{A(m,1,\ldots,1)}$.

\subsection{The period integral}\label{period}

The relation between the Rankin--Selberg $L$-function $L(s,f\times\overline{g})$ and a period integral of $f$ and $\overline{g}$ differs in the cases $n'=n$, $n'=n-1$ and $1\leq n'\leq n-2$, so we treat these cases separately.

\subsubsection{The case $n'=n$}

For $s\in\CC$ and $z=xy\in\mathfrak{h}^n$ as in \eqref{eq:DefinitionXY} let
$$ I_s(z) = \det(y)^s = y_1^{(n-1)s}y_2^{(n-2)s}\cdots y_{n-1}^s. $$
Following \cite[Chapter 10.4]{Gol06}, we define the degenerate Eisenstein series attached to the standard maximal parabolic subgroup $P_{G,\max}$ of $G$ corresponding to the partition $n=(n-1)+1$ for sufficiently large $\Re s$ by the absolutely convergent sum
$$ E_s(z) = \sum_{\gamma\in P_{G,\max}\cap\SL(n,\ZZ)\backslash\SL(n,\ZZ)} I_s(\gamma z). $$
The degenerate Eisenstein series $E_s(z)$ has a meromorphic continuation to $s\in\CC$. The Rankin--Selberg convolution of two Maass forms $f$ and $g$ on $\GL(n,\RR)$ is for $s\in\CC$ with sufficiently large $\Re s$ defined by the period integral
$$ \Lambda(s,f\times\overline{g}) = \int_{\Gamma_G\backslash\mathfrak{h}^n} f(z)\overline{g(z)}E_s(z)\,d^*z, $$
where $d^*z$ denotes a (suitably normalized) measure on $\Gamma_G\backslash\mathfrak{h}^n$, which is locally given by an $\SL(n,\RR)$-invariant measure on $\mathfrak{h}^n$ (see \cite[Chapter 1.5]{Gol06} for details).

As in \cite[Chapter 12.1]{Gol06} one shows that if $f$ is of type $\alpha\in\CC^{n-1}$ and $g$ is of type $\beta\in\CC^{n-1}$ the Rankin--Selberg convolution can be expressed in terms of the Rankin--Selberg $L$-function as follows:
$$ \zeta(ns)\cdot\Lambda(s,f\times\overline{g}) = A_f(1,\ldots,1)\overline{A_g(1,\ldots,1)}\cdot G_{\alpha,\overline{\beta}}(s)\cdot L(s,f\times\overline{g}), $$
where
\begin{align*}
G_{\alpha,\overline{\beta}}(s) &= \int_{{\RR_+^{n-1}}}W_J\left(y;\alpha,\psi\right)W_J(y;\overline{\beta},\overline{\psi})\cdot|\det(y)|^s\,d^*y,
\end{align*}
with $y=\diag(y_1\cdots y_{n-1},\ldots,y_1y_2,y_1,1)$ and $d^*y=\prod_{j=1}^{n-1}y_j^{-1-j(n-j)}dy_j$.

\subsubsection{The case $n'=n-1$}

Let $f$ be a cuspidal Maass form on $\GL(n,\RR)$. Then its restriction $f|_{\GL(n',\RR)}$ to $\GL(n',\RR)\subseteq\GL(n,\RR)$, embedded as a block in the upper left corner of $\GL(n,\RR)$, decays rapidly on $\GL(n',\RR)\cap A_G$ and therefore the following integral converges for all Maass forms $g$ on $\GL(n',\RR)$ and all $s\in\CC$:
$$ \Lambda(s,f\times\overline{g}) = \int_{\SL(n',\ZZ)\backslash\GL(n',\RR)} f(z)\overline{g(z)}|\det(z)|^{s-\frac{1}{2}}\,d^*z, $$
where $d^*z$ denotes a (suitably normalized) $\GL(n',\RR)$-invariant measure on $\SL(n',\ZZ)\backslash\GL(n',\RR)$. In \cite[Chapter 12.2]{Gol06} it is shown that for $f$ of type $\alpha\in\CC^{n-1}$ and $g$ of type $\beta\in\CC^{n-2}$ we have
$$ \Lambda(s,f\times\overline{g}) = A_f(1,\ldots,1)\overline{A_g(1,\ldots,1)}\cdot G_{\alpha,\overline{\beta}}(s)\cdot L(s,f\times\overline{g}), $$
where
\begin{align*}
	G_{\alpha,\overline{\beta}}(s) &= \int_{{\RR_+^{n-1}}}W_J\left(\begin{pmatrix}y&\\&1\end{pmatrix};\alpha,\psi\right)W_J(y;\overline{\beta},\overline{\psi})\cdot|\det(y)|^{s-\frac{1}{2}}\,d^*y,
\end{align*}
with $y=\diag(y_1\cdots y_{n-1},\ldots,y_1y_2,y_1)$ and $d^*y=\prod_{j=1}^{n-1}y_j^{-1-(j-1)(n-j)}dy_j$.

\subsubsection{The case $1\leq n'\leq n-2$}\label{sec:PeriodIntegralCase3}

Let $Y_{n,n'}\subseteq G$ denote the unipotent radical of the standard parabolic subgroup of $G$ associated to the partition $n=(n'+1)+1+\cdots+1$, i.e. $Y_{n,n'}=\{u\in N_G:u_{ij}=0\mbox{ whenever }1\leq i<j\leq n'+1\}$.

Following \cite[Chapter 12.3]{Gol06} we consider for a Maass form $f$ on $\GL(n,\RR)$ the projection
$$ \PP^n_{n'} f(z) = |\det(z)|^{-\frac{n-n'-1}{2}}\int_{Y_{n,n'}\cap\Gamma_G\backslash Y_{n,n'}}f\left(u\begin{pmatrix}z&\\&\1_{n-n'}\end{pmatrix}\right)\overline{\psi(u)}\,du \qquad (z\in\GL(n',\RR)). $$

The Rankin--Selberg convolution of a cusp form $f$ for $\SL(n,\ZZ)$ with a Maass form $g$ for $\SL(n',\ZZ)$ is for $s\in\CC$ defined by the period integral
$$ \Lambda(s,f\times\overline{g}) = \int_{\SL(n',\ZZ)\backslash\GL(n',\RR)} \PP^n_{n'} f(z)\cdot\overline{g(z)}\cdot|\det(z)|^{s-\frac{1}{2}}\,d^*z, $$
where $d^*z$ is a (suitably normalized) $\GL(n',\RR)$-invariant measure on $\SL(n',\ZZ)\backslash\GL(n',\RR)$.

As in \cite[Chapter 12.3]{Gol06} one shows that if $f$ is of type $\alpha\in\CC^{n-1}$ and $g$ is of type $\beta\in\CC^{n'-1}$ the Rankin--Selberg convolution can be expressed in terms of the Rankin--Selberg $L$-function as follows:
$$ \Lambda(s,f\times\overline{g}) = A_f(1,\ldots,1)\overline{A_g(1,\ldots,1)}\cdot G_{\alpha,\overline{\beta}}(s)\cdot L(s,f\times\overline{g}), $$
where
\begin{align*}
	G_{\alpha,\overline{\beta}}(s) &= \int_{{\RR_+^{n'}}}W_J\left(\begin{pmatrix}y&\\&\1_{n-n'}\end{pmatrix};\alpha,\psi\right)W_J(y;\overline{\beta},\overline{\psi})\cdot|\det(y)|^{s-\frac{n-n'}{2}}\,d^*y,
\end{align*}
with $y=\diag(y_1\cdots y_{n'},\ldots,y_1y_2,y_1)$ and $d^*y=\prod_{j=1}^{n'}y_j^{-1-(j-1)(n'-j+1)}dy_j$.

\subsection{Results about $G_{\alpha,\overline{\beta}}(s)$}

In general, no explicit formula for $G_{\alpha,\overline{\beta}}(s)$ is known. However, for the special cases $n'=n$ and $n'=n-1$ the integral was evaluated explicitly by Stade as a product of gamma factors (see \cite{Sta01,Sta02}). Moreover, for $n'=n-2$ the integral can be simplified to a one-dimensional Barnes integral (see Ishii--Stade~\cite{IS13}). We also refer to Jacquet~\cite{Jac72} for the case $(n,n')=(2,2)$, Stade~\cite{Sta93} for the case $(n,n')=(3,3)$, Bump~\cite{Bum88} for the case $(n,n')=(3,2)$ and Hoffstein--Murty~\cite{HM89} for the case $(n,n')=(3,1)$.

All results are most easily expressed in terms of the Langlands parameters $\lambda\in\CC^n$ and $\nu\in\CC^{n'}$ instead of $\alpha\in\CC^{n-1}$ and $\overline{\beta}\in\CC^{n'-1}$. These are uniquely determined by $\lambda_1+\cdots+\lambda_n=\nu_1+\cdots+\nu_{n'}=0$ and the relations
$$ \alpha_j = \frac{1}{n}(\lambda_j-\lambda_{j+1}+1), \qquad \overline{\beta_k} = \frac{1}{n'}(\nu_k-\nu_{k+1}+1). $$
Abusing notation, we write $G_{\lambda,\nu}(s)$ instead of $G_{\alpha,\overline{\beta}}(s)$.

We further use the standard notation
$$ \Gamma_\RR(s) = \frac{\Gamma(\frac{s}{2})}{\pi^{\frac{s}{2}}}. $$

\begin{theorem}[see \cite{IS13,Sta01,Sta02}]
\begin{enumerate}
\item For $n'=n$ we have
$$ G_{\lambda,\nu}(s) = \frac{\prod_{j,k=1}^n\Gamma_\RR(s+\lambda_j+\nu_k)}{2^{n-1}\Gamma_\RR(ns)\prod_{1\leq j<k\leq n}\Gamma_\RR(\lambda_j-\lambda_k+1)\Gamma_\RR(\nu_j-\nu_k+1)}. $$
\item For $n'=n-1$ we have
$$ G_{\lambda,\nu}(s) = \frac{\prod_{j=1}^n\prod_{k=1}^{n-1}\Gamma_\RR(s+\lambda_j+\nu_k)}{2^{n-1}\prod_{1\leq j<k\leq n}\Gamma_\RR(\lambda_j-\lambda_k+1)\prod_{1\leq j<k\leq n-1}\Gamma_\RR(\nu_j-\nu_k+1)}. $$
\item For $n'=n-2$ we have
\begin{multline*}
 \hspace{1.5cm}G_{\lambda,\nu}(s) = \frac{\prod_{j=1}^n\prod_{k=1}^{n-2}\Gamma_\RR(s+\lambda_j+\nu_k)}{2^{n-1}\prod_{1\leq j<k\leq n}\Gamma_\RR(\lambda_j-\lambda_k+1)\prod_{1\leq j<k\leq n-2}\Gamma_\RR(\nu_j-\nu_k+1)}\\
 \times \frac{1}{2\pi\sqrt{-1}}\int_\gamma \frac{\prod_{j=1}^n\Gamma_\RR(z-\lambda_j)}{\prod_{k=1}^{n-2}\Gamma_\RR(s+z+\nu_k)}\,dz,
\end{multline*}
where $\gamma$ is a contour from $-i\infty$ to $+i\infty$ such that all poles of the integrand are on its left.
\end{enumerate}
\end{theorem}

We remark that the correction factor $2^{n-1}$ appears in the denominator since integration is over $\RR_+$ rather than $\RR^\times$. The correction factor $\Gamma_\RR(ns)$ appears in the denominator when $n'=n$, because the integral is over $A_G$ modulo the center instead of $A_G$. The correction factors $\Gamma_\RR(\lambda_j-\lambda_k+1)$ and $\Gamma_\RR(\nu_j-\nu_k+1)$ appear in the denominator, because the Whittaker functions $W_J$ have been $L^2$-normalized (in contrast to Stade's $W_{(n,\alpha)}$, see below). In this sense, the relevant terms for the completion of the $L$-function are the factors $\Gamma_\RR(s+\lambda_j+\nu_k)$.

\begin{proof}
First note that $W_J(y;\alpha,\psi)=W_J(y;\alpha,\overline{\psi})$ for all $y\in A_G$ (see \cite[Lemma 6.5.6]{Gol06} for the case $n=3$, the same argument works for general $n$). To translate between Goldfeld's notation (which we use, see \cite{Gol06}) and Stade's notation (see \cite{IS13,Sta90,Sta01,Sta02}), we note that the function $I_\alpha(z)$ defined in \eqref{eq:DefinitionIdelta} is related to Stade's $H_\alpha(z)$ defined in \cite[equation (1.1)]{Sta90} by
$$ I_\alpha(z) = H_{\alpha^*}(z) \qquad (z\in\mathfrak{h}^n), $$
where $\alpha^*=(\alpha_{n-1},\ldots,\alpha_1)$. This implies that the Whittaker function $W_J(y;\alpha,\psi)$ defined in \eqref{eq:DefinitionWhittaker} is related to Stade's $W_{(n,\alpha)}(y_1,\ldots,y_{n-1})$ defined in \cite[equation (2.2)]{Sta90} by
$$ W_J(y;\alpha,\psi) = \prod_{1\leq i<j\leq n-1}\frac{1}{\Gamma_\RR(\lambda_i-\lambda_j+1)}W_{(n,\alpha^*)}(y_{n-1},\ldots,y_1). $$
In \cite{IS13,Sta01,Sta02}, Langlands parameters are used instead of the parameter $\alpha$, which yields the relation $W_{(n,\alpha^*)}=W_{n,\lambda}$ between the Whittaker function $W_{(n,\alpha^*)}$ from \cite{Sta90} and the Whittaker function $W_{n,\lambda}$ from \cite{Sta01} (see \cite[top display on p. 132]{Sta01}). With these relations, (a) is \cite[Theorem 1.1]{Sta02}, (b) is \cite[Theorem 3.4]{Sta01} and (c) is \cite[Theorem 3.2]{IS13}.
\end{proof}

\section{Automorphic Rankin--Selberg periods}\label{sec:AutRSperiods}

In this section we explain how the Rankin--Selberg period integrals define invariant forms on tensor products of automorphic representations.

\subsection{Automorphic representations}

Under the unitary action of $G$ on $L^2(\Gamma_G\cdot Z(G)\backslash G)$, any Maass form $f$ on $G$ generates an irreducible subrepresentation of $L^2(\Gamma_G\cdot Z(G)\backslash G)$ that is spherical, the function $f$ being the unique (up to scalar multiples) $K_G$-spherical vector. We write $V_f\subseteq L^2(\Gamma_G\cdot Z(G)\backslash G)$ for the subspace of smooth vectors and $\pi_f$ for the corresponding action of $G$ on $V_f$. Then $V_f\subseteq C^\infty(\Gamma_G\cdot Z(G)\backslash G)$. If further $f$ is a cusp form, it was shown in \cite{MS12} that all functions in $V_f$ decay rapidly along $A_G$.

Similarly, we denote by $(\tau_{\overline{g}},W_{\overline{g}})$ the smooth vectors of the irreducible unitary subrepresentation of $L^2(\SL(n',\ZZ)\cdot\RR^\times\backslash\GL(n',\RR))$ generated by the complex conjugate $\overline{g}$ of a Maass form $g$ on $\GL(n',\RR)$ ($1\leq n'\leq n$).

Moreover, for $s\in\CC$ the degenerate Eisenstein series $E_s\in C^\infty(\Gamma_G\cdot Z(G)\backslash G)$ generates a spherical subrepresentation $U_s\subseteq C^\infty(\Gamma_G\cdot Z(G)\backslash G)$ that is isomorphic to a subrepresentation of a degenerate principal series of $G$ (see Section~\ref{sec:DegeneratePrincipalSeries} for a more detailed description). Here $E_s$ is viewed as a $K_G$-invariant and $Z(G)$-invariant function on $G$. We write $\sigma_s$ for the corresponding $G$-action on $U_s$. Note that for $s\in\CC$ outside a certain discrete set, the degenerate principal series is irreducible, so that $\sigma_s$ is isomorphic to the full degenerate principal series.

\subsection{Automorphic Rankin--Selberg periods}

Let $f$ be a Maass cusp form for $\SL(n,\ZZ)$ and $g$ a Maass form for $\SL(n',\ZZ)$. The period integral $\Lambda(s,f\times\overline{g})$ can be extended to an invariant linear form on the tensor products of the corresponding automorphic representations. For this we again distinguish between the cases $n'=n$, $n'=n-1$ and $1\leq n'\leq n-2$.

\subsubsection{The case $n'=n$}

We have
$$ \Lambda(s,f\times\overline{g}) = \int_{\Gamma_G\cdot Z(G)\backslash G} f(h)\overline{g(h)}E_s(h)\,dh, $$
where $E_s$ is viewed as a $K_G$-invariant function on $\Gamma_G\cdot Z(G)\backslash G$. This integral makes sense if we replace $f$ and $\overline{g}$ by arbitrary functions in $V_f$ and $W_{\overline{g}}$ and $E_s$ by an arbitrary function in $U_s$ and defines a $G$-invariant linear form
$$ \ell^{\rm aut}_{f,\overline{g},s}:V_f\widehat{\otimes} W_{\overline{g}}\widehat{\otimes} U_s\to\CC, \quad v\otimes w\otimes u\mapsto \int_{\Gamma_G\cdot Z(G)\backslash G} v(h)w(h)u(h)\,dh, $$
i.e. $\ell^{\rm aut}_{f,\overline{g},s}\in\Hom_G(\pi_f\widehat{\otimes}\tau_{\overline{g}}\widehat{\otimes}\sigma_s,\CC)$. The period $\Lambda(s,f\times\overline{g})$ can be recovered from $\ell^{\rm aut}_{f,\overline{g},s}$ as the special value at the tensor product of the spherical vectors: $\Lambda(s,f\times\overline{g})=\ell^{\rm aut}_{f,\overline{g},s}(f\otimes\overline{g}\otimes E_s)$. It follows from \cite[Theorem B]{CS15} that the space $\Hom_G(\pi_f\widehat{\otimes}\tau_{\overline{g}}\widehat{\otimes}\sigma_s,\CC)$ is at most one-dimensional, so that $\ell^{\rm aut}_{f,\overline{g},s}$ is proportional to any other non-zero period in $\Hom_G(\pi_f\widehat{\otimes}\tau_{\overline{g}}\widehat{\otimes}\sigma_s,\CC)$. To keep the notation uniform we put $H=G$ in this case.

\subsubsection{The case $n'=n-1$}

Let $H=\GL(n-1,\RR)$ and $\Gamma_H=\SL(n-1,\ZZ)$. Similar to the case $n'=n$ we can write $\Lambda(s,f\times\overline{g})=\ell^{\rm aut}_{f,\overline{g},s}(f\otimes\overline{g})$ for the $H$-invariant linear form
$$ \ell^{\rm aut}_{f,\overline{g},s}:V_f\widehat{\otimes} W_{\overline{g}}\to\CC, \quad v\otimes w\mapsto\int_{\Gamma_H\backslash H}v(h)w(h)|\det(h)|^{s-\frac{1}{2}}\,dh, $$
i.e. $\ell^{\rm aut}_{f,\overline{g},s}\in\Hom_H(\pi_f|_H\widehat{\otimes}\tau_{\overline{g}}\widehat{\otimes}\chi_s,\CC)$ with $\chi_s(h)=|\det(h)|^{s-\frac{1}{2}}$. By \cite[Theorem B]{SZ12} this space is at most one-dimensional, so $\ell^{\rm aut}_{f,\overline{g},s}$ is proportional to any other non-zero period in $\Hom_H(\pi_f|_H\widehat{\otimes}\tau_{\overline{g}}\widehat{\otimes}\chi_s,\CC)$.

\subsubsection{The case $1\leq n'\leq n-2$}

Let
$$ H = \left\{\begin{pmatrix}h_1&u\\&h_2\end{pmatrix}:h_1\in\GL(n',\RR),u\in M(n'\times(n-n'),\RR)^0,h_2\in N_{n-n'}\right\}, $$
where $M(n'\times(n-n'),\RR)$ denotes the space of real $n'\times(n-n')$-matrices, $M(n'\times(n-n'),\RR)^0$ the subspace of those matrices with first column equal to zero, and $N_{n-n'}$ the group of unipotent upper triangular matrices of size $n-n'$. We note that $H\simeq\GL(n',\RR)\ltimes Y_{n,n'}$ with $Y_{n,n'}$ as in Section~\ref{sec:PeriodIntegralCase3}. Let further $\Gamma_H=H\cap\Gamma_G$.

The Rankin--Selberg convolution $\Lambda(s,f\times\overline{g})$ can be written as
$$ \Lambda(s,f\times\overline{g}) = \int_{\Gamma_H\backslash H}f(h)\overline{g(h)}\cdot\chi_s(h)\,dh, $$
where
\begin{equation}
 \chi_s\begin{pmatrix}h_1&u\\&h_2\end{pmatrix} = |\det(h_1)|^{s-\frac{n-n'}{2}}\overline{\psi(h_2)}.\label{eq:DefinitionChiS}
\end{equation}
Here we extend $g$ trivially to $H$ by putting $g(h):=g(h_1)$. Moreover, $dh$ denotes the right-invariant measure on $\Gamma_H\backslash H$ given by
$$ \int_{\Gamma_H\backslash H} \varphi(h)\,dh = \int_{\SL(n',\ZZ)\backslash\GL(n',\RR)}\int_{Y_{n,n'}\cap\Gamma_G\backslash Y_{n,n'}}\varphi(vh_1)\,dv\,dh_1. $$
(Note that $H$ is not unimodular.)

The integral defining $\Lambda(s,f\times\overline{g})$ makes sense even if we replace $f$ by an arbitrary function in $V_f$ and $\overline{g}$ by an arbitrary function in $W_{\overline{g}}$, which leads to the map
$$ \ell^{\rm aut}_{f,\overline{g},s}:V_f\widehat{\otimes} W_{\overline{g}}\to\CC, \quad v\otimes w\mapsto\int_{\Gamma_H\backslash H}v(h)w(h)\chi_s(h)\,dh, $$
which is defined due to the rapid decay of $u\in V_f$. It respects the action of the subgroup $H$ in the sense that $\ell^{\rm aut}_{f,\overline{g},s}$ intertwines the representation $\pi_f|_H\widehat{\otimes}\tau_{\overline{g}}\widehat{\otimes}\chi_s$ on $V_f\widehat{\otimes} W_{\overline{g}}$ and the trivial representation on $\CC$, i.e. $\ell^{\rm aut}_{f,\overline{g},s}\in\Hom_H(\pi_f|_H\widehat{\otimes}\tau_{\overline{g}}\widehat{\otimes}\chi_s,\CC)$. In \cite[Theorem A]{CS15} it was shown that $\dim\Hom_H(\pi_f|_H\widehat{\otimes}\tau_{\overline{g}}\widehat{\otimes}\chi_s,\CC)\leq1$ for all $s\in\CC$, so that the automorphic period $\ell_{f,\overline{g},s}^{\rm aut}$ is proportional to any other period in $\Hom_H(\pi_f|_H\widehat{\otimes}\tau_{\overline{g}}\widehat{\otimes}\chi_s,\CC)$.

\section{Model Rankin--Selberg periods on spherical principal series}\label{sec:ModRSperiods}

In this section we explicitly construct Rankin--Selberg periods on tensor products of spherical principal series representations.

\subsection{Parabolic subgroups}

Let $P_G$ denote the standard minimal parabolic subgroup of $G$ consisting of all upper triangular matrices. The parabolic subgroup $P_G$ has a Langlands decomposition $P_G=M_GA_GN_G$ with $M_G$ the subgroup of diagonal matrices with entries in $\{\pm1\}$ and $A_G$ and $N_G$ as in Section~\ref{sec:MaassForms}.

In the case $n'=n-1$ the group $P_H=P_G\cap H$ is the standard minimal parabolic subgroup of $H$ and $P_H=M_HA_HN_H$ with $M_H=M_G\cap H$, $A_H=A_G\cap H$, $N_H=N_G\cap H$ is a Langlands decomposition.

For $n'=n-2$ we also put $P_H=P_G\cap H$, which has a similar decomposition $P_H=M_HA_HN_H$, where $M_H\simeq M_G\cap\GL(n',\RR)$, $A_H\simeq A_G\cap\GL(n',\RR)$ and $N_H\simeq(N_{n'}\times N_{n-n'})\ltimes M(n'\times(n-n'),\RR)^0$.

\subsection{Principal series representations}

For $\lambda\in\CC^n$ and $a=\diag(a_1,\ldots,a_n)\in G$ let
$$ a^\lambda =a_1^{\lambda_1}\cdots a_n^{\lambda_n}. $$
This defines a character $e^\lambda:A_G\to\CC^\times,\,a\mapsto a^\lambda$. Let
$$ \pi_\lambda = \Ind_{P_G}^G(\1\otimes e^\lambda\otimes\1), $$
where we use smooth normalized parabolic induction, i.e. $\pi_\lambda$ can be realized as the right-regular representation of $G$ on
$$ I_\lambda = \{u\in C^\infty(G):u(mang)=a^{\lambda+\rho_G}u(g)\,\forall\,g\in G,man\in M_GA_GN_G\} $$
with $\rho_G=(\frac{n-1}{2},\frac{n-3}{2},\ldots,\frac{1-n}{2})$ corresponding to the half sum of all roots of $\fraka_G$ in $\frakn_G$. In particular, $\pi_\lambda$ is unitary for $\lambda\in(i\RR)^n$, the invariant inner product being
$$ \langle u,v\rangle = \int_{K_G} u(k)\overline{v(k)}\,dk. $$

The representation $\pi_\lambda$ is $K_G$-spherical and we normalize the spherical vector $\phi_\lambda\in I_\lambda$ such that $\phi_\lambda(e)=1$. To give an explicit formula for $\phi_\lambda$ we define for $1\leq k\leq n$ and $1\leq i_1,\ldots,i_k\leq n$ a polynomial $p_{i_1,\ldots,i_k}$ on $M(n\times n,\RR)$ by
\begin{equation}
 p_{i_1,\ldots,i_k}(x) := \det((x_{ij})_{i=n,\ldots,n-k+1}^{j=i_1,\ldots,i_k}).\label{eq:DefPolynomials}
\end{equation}
For $1\leq k\leq n$ we define polynomials $\Phi_k$ on $M(n\times n,\RR)$ by
\begin{equation}
 \Phi_k(x) = p_{1,\ldots,k}(x)\label{eq:DefinitionPhi}
\end{equation}
and note that they satisfy
\begin{equation}
	\Phi_k(gxh) = g_{n,n}\cdots g_{n-k+1,n-k+1}h_{1,1}\cdots h_{k,k}\Phi_k(x) \qquad \forall\,g,h\in P_G.\label{eq:EquivPhik}
\end{equation}
We remark that $\Phi_k$ also can be expressed as the determinant of a matrix product:
$$ \Phi_k(x) = \det\left(\begin{pmatrix}\1_{k\times k}&\0_{k\times(n-k)}\end{pmatrix}wx\begin{pmatrix}\1_{k\times k}\\\0_{(n-k)\times k}\end{pmatrix}\right), $$
where $w$ represents the longest Weyl group element (see Section \ref{sec:FWexpansion}).

\begin{lemma}
	$\phi_\lambda$ can be expressed as
	\begin{equation}
		\phi_\lambda(g) = \prod_{k=1}^n\big|\Phi_k(gg^\top w)\big|^{\frac{\lambda_{n-k+1}-\lambda_{n-k}-1}{2}} \qquad (g\in G),\label{eq:ExplicitFormulaSphericalVector}
	\end{equation}
	where $w$ is a representative of the longest Weyl group element (see Section \ref{sec:FWexpansion}).
\end{lemma}

\begin{proof}
	$\phi_\lambda$ is the unique smooth function on $G$ such that $\phi_\lambda(nak)=a^{\lambda+\rho_G}$, so it suffices to show that the right hand side of \eqref{eq:ExplicitFormulaSphericalVector} has the same properties. Since $gg^\top$ is positive definite, its principal minors are strictly positive. This implies that $g\mapsto\Phi_k(gg^\top w)$ is a smooth nowhere vanishing function on $G$. Hence, the right hand side of \eqref{eq:ExplicitFormulaSphericalVector} is smooth. We further have
	\begin{equation}
		\Phi_k(nak) = \Phi_k(nakk^\top a^\top n^\top w) = \Phi_k(na^2w(w^{-1}n^\top w)) = a_{n,n}\cdots a_{n-k+1,n-k+1}\Phi_k(w)\label{eq:PhikNAK}
	\end{equation}
	for all $k\in K_G$, $a\in A_G$ and $n\in N_G$ by \eqref{eq:EquivPhik}. Note that $\Phi_k(w)=\pm1$, depending on the choice of the representative $w$ of the longest Weyl group element. (One possible choice is $w_{i,j}=\delta_{i,n-j+1}$, then $\Phi_k(w)=1$ for all $k=1,\ldots,n$.) Applying \eqref{eq:PhikNAK} to every factor in \eqref{eq:ExplicitFormulaSphericalVector} shows the claim.
\end{proof}

Similarly, for $\nu\in\CC^{n'}$ we write $\tau_\nu$ for the corresponding spherical principal series representation of $\GL(n',\RR)$ and realize $\tau_\nu$ on a subspace $J_\nu\subseteq C^\infty(\GL(n',\RR))$. For $n'=n$ and $n'=n-1$ this defines a representation of $H=\GL(n',\RR)$, and in the case $1\leq n'\leq n-2$ we extend $\tau_\nu$ trivially to $H=(\GL(n',\RR)\times N_{n-n'})\ltimes M(n'\times(n-n'),\RR)^0$. For $1\leq n'\leq n-1$ we further recall the character $\chi_s$ of $H$ from \eqref{eq:DefinitionChiS}. Note that $\tau_\nu\otimes\chi_s\simeq\Ind_{P_H}^H(e^\nu\otimes\chi_s)$, realized on
$$ \{u\in C^\infty(H):u(ph)=a^{\nu+\rho_H}\chi_s(p)u(h)\,\forall\,h\in H,p=man\in P_H\} $$
with $\rho_H=(\frac{n'-1}{2},\frac{n'-3}{2},\ldots,\frac{1-n'}{2})$.

\subsection{Degenerate principal series representations}\label{sec:DegeneratePrincipalSeries}

Let $P_{G,\max}\subseteq G$ be the standard maximal parabolic subgroup of $G$ corresponding to the partition $n=(n-1)+1$, i.e.
$$ P_{G,\max} = \left\{\begin{pmatrix}A&b\\0&d\end{pmatrix}:A\in\GL(n-1,\RR),b\in\RR^{n-1},d\in\GL(1,\RR)\right\}. $$
For $r\in\CC$ let
$$ \xi_r\begin{pmatrix}A&b\\0&d\end{pmatrix} := |\det A|^r|d|^{-(n-1)r} $$
and induce this character of $P_{G,\max}$ to $G$ (smooth normalized parabolic induction):
$$ \varsigma_r := \Ind_{P_{G,\max}}^G(\xi_r), $$
realized on the space
$$ L_r = \{f\in C^\infty(G):f(pg)=\xi_{r+\frac{1}{2}}(p)f(g)\,\forall\,g\in G,p\in P_{G,\max}\}. $$
Note that $\varsigma_r$ is unitary for $r\in i\RR$.

Write $f_r$ for the unique $K_G$-spherical vector in $L_r$ with $f_r(e)=1$. We claim that $f_r=I_s$ for $s=r+\frac{1}{2}$. In fact, let $g=xyk\in N_GA_GK_G$, then
$$ f_r(xyk)=\xi_{r+\frac{1}{2}}(xy)=\xi_{r+\frac{1}{2}}(x)\xi_{r+\frac{1}{2}}(y). $$
On $x\in N_G$ the character $\xi_{r+\frac{1}{2}}$ is obviously trivial, and on $y\in A_G$ it is given by
$$ \xi_{r+\frac{1}{2}}(y) = I_s(y). $$

It follows that for $s=r+\frac{1}{2}$ the $G$-intertwining operator
$$ \zeta:L_r \to U_s, \quad f\mapsto \sum_{\gamma\in P_{G,\max}\cap\SL(n,\ZZ)\backslash\SL(n,\ZZ)}f(\gamma g) $$
maps the spherical vector $f_r$ to the degenerate Eisenstein series $E_s$.

An explicit expression for $f_r$ is given by
$$ f_r(g) = |\Phi_1(gg^\top w)|^{-\frac{n}{2}(r+\frac{1}{2})}|\Phi_n(gg^\top w)|^{\frac{1}{2}(r+\frac{1}{2})}. $$

\subsection{Invariant forms on principal series}

We construct invariant forms on tensor products of principal series representations.

\subsubsection{The case $n'=n$}

An invariant form $\ell\in\Hom_G(\pi_\lambda\widehat{\otimes}\tau_\nu\widehat{\otimes}\varsigma_r,\CC)$ is a $G$-invariant continuous linear operator $I_\lambda\widehat{\otimes} I_\nu\widehat{\otimes} L_r\to\CC$ and hence given by a distribution kernel $K\in\calD'(G\times G\times G)$ that satisfies the following equivariance conditions (see e.g. \cite[Sections 3.2 and 3.5]{KS15} or \cite[Section 3.2]{Moe17} for details on this matter):
\begin{enumerate}
\item $K(g_1g,g_2g,g_3g)=K(g_1,g_2,g_3)$ for all $g\in G$,
\item $K(mang_1,g_2,g_3)=a^{-\lambda+\rho_G}K(g_1,g_2,g_3)$ for all $man\in P_G=M_GA_GN_G$,
\item $K(g_1,mang_2,g_3)=a^{-\nu+\rho_G}K(g_1,g_2,g_3)$ for all $man\in P_G=M_GA_GN_G$,
\item $K(g_1,g_2,pg_3)=\xi_{-r+\frac{1}{2}}(p)K(g_1,g_2,g_3)$ for all $p\in P_{G,\max}$.
\end{enumerate}
If $K$ is a locally integrable function, then the corresponding invariant form is given by
\begin{align*}
 \ell(v\otimes w\otimes u) &= \int_{K_G\times K_G\times K_G} K(k_1,k_2,k_3)v(k_1)w(k_2)u(k_3)\,d(k_1,k_2,k_3),
\end{align*}
otherwise the integral has to be understood in the sense of generalized functions. Using the integral formula \cite[formula (5.25)]{Kna01} this integral can be rewritten as
$$ \ell(v\otimes w\otimes u) = \int_{\overline{N}_G\times\overline{N}_G\times\overline{N}_{G,\max}} K(\overline{n}_1,\overline{n}_2,\overline{n}_3)v(\overline{n}_1)w(\overline{n}_2)u(\overline{n}_3)\,d(\overline{n}_1,\overline{n}_2,\overline{n}_3), $$
where $\overline{N}_G$ resp. $\overline{N}_{G,\max}$ denotes the nilradical of the parabolic subgroup of $G$ opposite to $P_G$ resp. $P_{G,\max}$.

Non-zero invariant forms on $\pi_\lambda\widehat{\otimes}\tau_\nu\widehat{\otimes}\varsigma_r$ can only exist if the center of $G$ acts trivially, which implies
$$ \lambda_1+\cdots+\lambda_n+\nu_1+\cdots+\nu_n=0. $$
We will assume this for the construction.

Recall the polynomials $\Phi_k(x)$ on $M(n\times n,\RR)$ from \eqref{eq:DefinitionPhi}. For $x,y\in M(n\times n,\RR)$ and $1\leq k\leq n$ we further define
\begin{equation*}
 \Phi_k(x,y) = \det\begin{pmatrix}x_{n,\ldots,k+1}\\y_{n,\ldots,n-k+2}\\0\,\,\cdots\,\,0\,\,1\end{pmatrix},
\end{equation*}
where for $1\leq i_1,\ldots,i_k\leq n$ we write $x_{i_1,\ldots,i_k}$ for the $k\times n$-matrix whose rows are the rows $i_1,\ldots,i_k$ of $x$, and similar for $y$. An alternative expression for $\Phi_k(x,y)$ is
$$ \Phi_k(x,y) = \det\left(\begin{pmatrix}\begin{pmatrix}\1_{(n-k)\times (n-k)}&\0_{(n-k)\times k}\end{pmatrix}wx\\\begin{pmatrix}\1_{(k-1)\times (k-1)}&\0_{(k-1)\times(n-k+1)}\end{pmatrix}wy\end{pmatrix}\begin{pmatrix}\1_{(n-1)\times(n-1)}\\\0_{1\times(n-1)}\end{pmatrix}\right). $$
It is easy to see that
\begin{align*}
	\Phi_k(gx,hy) &= g_{n,n}\cdots g_{k+1,k+1}h_{n,n}\cdots h_{n-k+2,n-k+2}\Phi_k(x,y) && \forall\,g,h\in P_G,\\
	\Phi_k(xp,yp) &= \det(A)\Phi_k(x,y) && \forall\,p=\begin{pmatrix}A&b\\0&d\end{pmatrix}\in P_{G,\max}.
\end{align*}

Define
\begin{multline*}
	K_{\lambda,\nu,s}(g_1,g_2,g_3) := |\det(g_1g_2)|^\mu|\det(g_3)|^{-2\mu}|\Phi_1(g_1g_2^{-1})|^{s_1}\cdots|\Phi_n(g_1g_2^{-1})|^{s_n}\\
	\times|\Phi_1(g_1g_3^{-1},g_2g_3^{-1})|^{t_1}\cdots|\Phi_n(g_1g_3^{-1},g_2g_3^{-1})|^{t_n}.
\end{multline*}
This function obviously satisfies (1), and a short computation shows that it satisfies (2), (3) and (4) if and only if
\begin{align*}
	\mu &= -\frac{n-1}{2}(s-1), & s_i &= -\lambda_{n-i+1}-\nu_{i+1}-s & t_i &= \lambda_i+\nu_{n-i+1}+s-1.
\end{align*}

Since $\det(x)$, $\Phi_k(x)$ and $\Phi_k(x,y)$ are polynomial functions, the kernel $K_{\lambda,\nu,s}$ is a product of complex powers of analytic functions on $G$ and therefore extends to a meromorphic family of distributions by standard arguments. This shows:

\begin{proposition}\label{prop:ModelFormsEqual}
The prescription
\begin{multline*}
 \ell_{\lambda,\nu,s}^{\mod}:I_\lambda\widehat{\otimes} I_\nu\widehat{\otimes} L_{s-\frac{1}{2}}\to\CC,\\
 v\otimes w\otimes u\mapsto\int_{K_G\times K_G\times K_G}K_{\lambda,\nu,s}(g_1,g_2,g_3)v(g_1)w(g_2)u(g_3)\,d(g_1,g_2,g_3)
\end{multline*}
defines a meromorphic family of invariant forms $\ell_{\lambda,\nu,s}^{\mod}\in\Hom_G(\pi_\lambda\widehat{\otimes}\pi_\nu\widehat{\otimes}\varsigma_{s-\frac{1}{2}},\CC)$.
\end{proposition}

\subsubsection{The case $1\leq n'\leq n-1$}

As in the case $n'=n$ invariant forms $\ell\in\Hom_H(\pi_\lambda|_H\widehat{\otimes}\tau_\nu\widehat{\otimes}\chi_s,\CC)$ correspond to distribution kernels $K\in\calD'(G\times H)$ such that
\begin{enumerate}
\item $K(gk,hk)=K(g,h)$ for all $k\in H$,
\item $K(p_Gg,h)=p_G^{-\lambda+\rho_G}K(g,h)$ for all $p_G\in P_G$,
\item $K(g,p_Hh)=p_H^{-\nu+\rho_H}\chi_s(p_H)^{-1}K(g,h)$ for all $p_H\in P_H$.
\end{enumerate}

Recall the polynomials $\Phi_k(x)$ on $M(n\times n,\RR)$ from \eqref{eq:DefinitionPhi}. We further define
\begin{align*}
 \Psi_k(x) &= p_{1,\ldots,k-1,n'+1}(x) && (1\leq k\leq n'),\\
 \Xi_k(x) &= p_{1,\ldots,k-1,k+1}(x) && (n'+1\leq k\leq n-1),
\end{align*}
where $p_{i_1,\ldots,i_k}(x)$ denote the polynomials from \eqref{eq:DefPolynomials}. Note that
$$ \Psi_k(x) = \Phi_k(xw_{k,n'+1}) \qquad \mbox{and} \qquad \Xi_k(x) = \Phi_k(xw_{k,k+1}), $$
where $w_{i,j}$ denotes the permutation matrix associated to the transposition $(i\,\,j)$.
The following equivariance properties for $g\in P_G$ and $h\in P_H$ are easy to verify:
\begin{align*}
 \Psi_k(gxh) &= g_{n,n}\cdots g_{n-k+1,n-k+1}h_{1,1}\cdots h_{k-1,k-1}\Psi_k(x),\\
 \Xi_k(gxh) &= g_{n,n}\cdots g_{n-k+1,n-k+1}h_{1,1}\cdots h_{n',n'}\big(\Xi_k(x)+h_{k,k+1}\Phi_k(x)\big).
\end{align*}

For $\lambda\in\CC^n$, $\nu\in\CC^{n'}$ and $s\in\CC$ we define the following kernel function:
\begin{multline*}
 K_{\lambda,\nu,s}(g,h) := |\Phi_1(gh^{-1})|^{s_1}\cdots|\Phi_n(gh^{-1})|^{s_n}|\Psi_1(gh^{-1})|^{t_1}\cdots|\Psi_{n'}(gh^{-1})|^{t_{n'}}\\
 \times\exp\Bigg(-2\pi\sqrt{-1}\sum_{k=n'+1}^{n-1}\frac{\Xi_k(gh^{-1})}{\Phi_k(gh^{-1})}\Bigg),
\end{multline*}
where
$$ s_i = \lambda_{n-i}+\nu_i+s-1 \quad (1\leq i\leq n'), \quad s_i = \lambda_{n-i}-\lambda_{n-i+1}-1 \quad (n'+1\leq i\leq n-1), $$
$$ s_n = -\lambda_1+\tfrac{n-1}{2}, \quad t_j = -\lambda_{n-j+1}-\nu_j-s \quad (1\leq j\leq n'). $$
By the above equivariance properties for $\Phi_k$, $\Psi_k$ and $\Xi_k$ it is easy to verify that the function $K_{\lambda,\nu,s}$ satisfies the desired equivariance properties. Further, for $\Re(s_i),\Re(t_j)\geq0$ it is is locally integrable and hence defines a distribution in $\calD'(G\times H)$.

\begin{proposition}\label{prop:ModelFormsUnequal}
	The prescription
	$$ \ell_{\lambda,\nu,s}^{\mod}:I_\lambda\widehat{\otimes} J_\nu\to\CC, \quad v\otimes w\mapsto\int_{K_G\times K_H}K_{\lambda,\nu,s}(g,h)v(g)w(h)\,d(g,h) $$
	defines a meromorphic family of invariant forms $\ell_{\lambda,\nu,s}^{\mod}\in\Hom_H(\pi_\lambda|_H\widehat{\otimes}\tau_\nu\widehat{\otimes}\chi_s,\CC)$.
\end{proposition}

\begin{proof}
Using resolution of singularities the meromorphic continuation can be reduced to that of the distributions
$$ u_\lambda(x) = |x_1|^{\lambda_1}\cdots|x_n|^{\lambda_n}\exp\big(ix_1^{k_1}\cdots x_n^{k_n}\big) \qquad (x\in\RR^n) $$
for fixed $k_1,\ldots,k_n\in\ZZ$, which is discussed in \cite[Section 4.6 and 4.10]{EKP02}.
\end{proof}

\subsection{Relation between automorphic and model periods}

Let $f$ be a cusp form on $\GL(n,\RR)$ and $g$ a Maass form on $\GL(n',\RR)$. Then $\ell_{f,\overline{g},s}^{\aut}\in\Hom_G(\pi_f\widehat{\otimes}\tau_{\overline{g}}\widehat{\otimes}\sigma_s,\CC)$ for $n'=n$ and $\ell_{f,\overline{g},s}^{\aut}\in\Hom_H(\pi_f|_H\widehat{\otimes}\tau_{\overline{g}}\widehat{\otimes}\chi_s,\CC)$ for $1\leq n'\leq n-1$. By the Multiplicity One Theorems \cite[Theorems A and B]{CS15} these space are at most one-dimensional. Now $\pi_f\simeq\pi_\lambda$ and $\tau_{\overline{g}}\simeq\tau_\nu$, where $\lambda\in\CC^n$ and $\nu\in\CC^{n'}$ are the Langlands parameters of $f$ and $\overline{g}$. Let $\theta:I_\lambda\to V_f$ and $\eta:J_\nu\to W_{\overline{g}}$ be equivariant unitary isomorphisms and recall the equivariant isomorphism $\zeta:L_{s-\frac{1}{2}}\to U_s$. Then for $n'=n$
\begin{equation*}
 \ell_{f,\overline{g},s}^{\aut}\circ(\theta\widehat{\otimes}\eta\widehat{\otimes}\zeta)\in\Hom_G(\pi_\lambda\widehat{\otimes}\tau_\nu\widehat{\otimes}\varsigma_{s-\frac{1}{2}},\CC)
\end{equation*}
and for $1\leq n'\leq n-1$
\begin{equation*}
 \ell_{f,\overline{g},s}^{\aut}\circ(\theta\widehat{\otimes}\eta)\in\Hom_H(\pi_\lambda|_H\widehat{\otimes}\tau_\nu\widehat{\otimes}\chi_s,\CC).
\end{equation*}
Using the Multiplicity One Theorems we can therefore relate the automorphic periods to the model periods $\ell_{\lambda,\nu,s}^{\mod}$ constructed in the previous section. There exists a proportionality constant $b_{f,\overline{g},s}\in\CC$ such that for $n'=n$
\begin{equation}
 \Lambda(s,f\times\overline{g}) = \ell_{f,\overline{g},s}^{\aut}(\theta_v\otimes\eta_w\otimes\zeta_u) = b_{f,\overline{g},s}\cdot\ell_{\lambda,\nu,s}^{\mod}(v\otimes w\otimes u) \qquad (v\in I_\lambda,w\in J_\nu,u\in L_{s-\frac{1}{2}})\label{eq:Proportionality1}
\end{equation}
and for $1\leq n'\leq n-1$
\begin{equation}
 \Lambda(s,f\times\overline{g}) = \ell_{f,\overline{g},s}^{\aut}(\theta_v\otimes\eta_w) = b_{f,\overline{g},s}\cdot\ell_{\lambda,\nu,s}^{\mod}(v\otimes w) \qquad (v\in I_\lambda,w\in J_\nu).\label{eq:Proportionality2}
\end{equation}

By the equivariance of $\theta$ and $\eta$, the spherical vectors $\phi_\lambda\in I_\lambda$ and $\psi_\nu\in J_\nu$ are mapped to scalar multiples of the spherical vectors $f\in V_f$ and $\overline{g}\in W_{\overline{g}}$. If we assume that $f$ and $\overline{g}$ are normalized to have $L^2$-norm one, then the respective scalars are of modulus one. Using the fact that $\zeta$ maps the spherical vector $f_{s-\frac{1}{2}}$ to the Eisenstein series $E_s$, it follows that for $n'=n$:
$$ |\Lambda(s,f\times\overline{g})| = |b_{f,\overline{g},s}|\cdot|\ell_{\lambda,\nu,s}^{\mod}(\phi_\lambda\otimes\psi_\nu\otimes f_{s-\frac{1}{2}})| $$
and for $1\leq n'\leq n-1$:
$$ |\Lambda(s,f\times\overline{g})| = |b_{f,\overline{g},s}|\cdot|\ell_{\lambda,\nu,s}^{\mod}(\phi_\lambda\otimes\psi_\nu)|. $$
To estimate $\Lambda(s,f\times\overline{g})$ it therefore suffices to estimate the special values of the model periods at the spherical vectors and the proportionality constants $b_{f,\overline{g},s}$. In this work we focus on the special values of the model periods and hope to come back to the proportionality scalars in a subsequent paper.

\subsection{Special values of invariant forms}

The special values of the model periods are given by integrating the previously constructed distribution kernels against the spherical vectors. In this section we explain how to reduce the number of variables in the integrals and simplify the distribution kernels.

\subsubsection{The case $n'=n$}

We have the following expression for the special value of $\ell^{\mod}_{\lambda,\nu,s}$:

\begin{lemma}\label{lem:ModelIntegral1}
For all $\lambda,\nu\in\CC^n$ and $r\in\CC$ we have
\begin{multline*}
 \ell^{\mod}_{\lambda,\nu,s}(\phi_\lambda\otimes\phi_\nu\otimes f_{s-\frac{1}{2}}) = \int_{\overline{N}_G\times\overline{N}_{G,\max}} \phi_\lambda(\overline{n}_1)f_{s-\frac{1}{2}}(\overline{n}_3)\\
 \times\prod_{k=1}^{n-1}|p_{k,\ldots,n-1}(\overline{n}_1\overline{n}_3^{-1})|^{\lambda_k+\nu_{n-k+1}+s-1}\,d(\overline{n}_1,\overline{n}_3).
\end{multline*}
\end{lemma}

\begin{proof}
Let $r=s-\frac{1}{2}$ for short. Since $\Hom_G(\pi_\lambda\widehat{\otimes}\tau_\nu\widehat{\otimes}\varsigma_r,\CC)\simeq\Hom_G(\pi_\lambda\widehat{\otimes}\varsigma_r,\tau_{-\nu})$ we can write the invariant form $\ell^{\mod}_{\lambda,\nu,s}$ as
$$ \ell^{\mod}_{\lambda,\nu,s}(v\otimes w\otimes u) = \int_{K_G} A_{\lambda,-\nu,r}(v\otimes u)(k)w(k)\,dk = \int_{\overline{N}_G} A_{\lambda,-\nu,r}(v\otimes u)(\overline{n})w(\overline{n})\,d\overline{n}, $$
where $A_{\lambda,-\nu,r}\in\Hom_G(\pi_\lambda\widehat{\otimes}\varsigma_r,\tau_{-\nu})$ is given by
\begin{align*}
 A_{\lambda,-\nu,r}(v\otimes u)(g) &= \int_{K_G\times K_G} K_{\lambda,\nu,s}(k_1,g,k_3)v(k_1)u(k_3)\,d(k_1,k_3)\\
 &= \int_{\overline{N}_G\times\overline{N}_{G,\max}} K_{\lambda,\nu,s}(\overline{n}_1,g,\overline{n}_3)v(\overline{n}_1)u(\overline{n}_3)\,d(\overline{n}_1,\overline{n}_3).
\end{align*}
Now, since $\phi_\lambda\in I_\lambda$ and $f_r\in L_r$ are both $K_G$-invariant, their tensor product $\phi_\lambda\otimes f_r\in I_\lambda\widehat{\otimes} L_r$ is also $K_G$-invariant and is therefore mapped to a $K_G$-invariant vector in $J_{-\nu}$ by the equivariant map $A_{\lambda,-\nu,r}$. The space of $K_G$-invariant vectors in $J_{-\nu}$ is one-dimensional and spanned by $\phi_{-\nu}$, so that
$$ A_{\lambda,-\nu,r}(\phi_\lambda\otimes f_r)(g) = A_{\lambda,-\nu,r}(\phi_\lambda\otimes f_r)(k_0)\cdot \phi_{-\nu}(g) $$
for any $k_0\in K_G$ since $\phi_{-\nu}|_{K_G}=1$. It follows that
$$ \ell^{\mod}_{\lambda,\nu,s}(\phi_\lambda\otimes\phi_\nu\otimes f_r) = A_{\lambda,-\nu,r}(\phi_\lambda\otimes f_r)(k_0)\cdot\int_{K_G}\phi_\nu(k)\phi_{-\nu}(k)\,dk. $$
Since $\phi_\nu|_{K_G}=\phi_{-\nu}|_{K_G}=1$ the latter integral is equal to $1$ and we have
\begin{multline*}
 \ell^{\mod}_{\lambda,\nu,s}(\phi_\lambda\otimes\phi_\nu\otimes f_r) = A_{\lambda,-\nu,r}(\phi_\lambda\otimes f_r)(k_0)\\
 = \int_{\overline{N}_G\times\overline{N}_{G,\max}} \phi_\lambda(\overline{n}_1)f_r(\overline{n}_3)K_{\lambda,\nu,s}(\overline{n}_1,k_0,\overline{n}_3)\,d(\overline{n}_1,\overline{n}_3).
\end{multline*}
To have a simple expression for $K_{\lambda,\nu,s}(\overline{n}_1,k_0,\overline{n}_3)$ we choose $k_0=w$, a representative of the longest Weyl group element. It is easy to see that for $\overline{n}_1\in\overline{N}_G$ we have
$$ \Phi_k(\overline{n}_1w^{-1}) = 1 \qquad \forall\,1\leq k\leq n. $$
Further, a short computation reveals that for $\overline{n}_3=\begin{pmatrix}\1_{n-1}&\\x^\top&1\end{pmatrix}\in\overline{N}_{G,\max}$ we have
\begin{align*}
 \Phi_k(x,w\overline{n}_3^{-1}) &= (-1)^{(k-1)(n-k)}p_{k,\ldots,n-1}(x)
\end{align*}
This shows:
\begin{align*}
 K_{\lambda,\nu,s}(\overline{n}_1,w,\overline{n}_3) = \prod_{k=1}^{n-1}|p_{k,\ldots,n-1}(\overline{n}_1\overline{n}_3^{-1})|^{\lambda_k+\nu_{n-k+1}+s-1}
\end{align*}
and the proof is complete.
\end{proof}

\subsubsection{The case $1\leq n'\leq n-1$}

Making use of the isomorphism $\Hom_H(\pi_\lambda|_H\widehat{\otimes}\tau_\nu\widehat{\otimes}\chi_s,\CC)\simeq\Hom_H(\pi_\lambda|_H\widehat{\otimes}\chi_s,\tau_{-\nu})$ and proceeding as in the previous section shows:

\begin{lemma}\label{lem:ModelIntegral2}
For all $\lambda\in\CC^n$, $\nu\in\CC^{n'}$ and $s\in\CC$ we have
\begin{multline*}
 \ell^{\mod}_{\lambda,\nu,s}(\phi_\lambda\otimes\psi_\nu) = \int_{\overline{N}_G}\phi_\lambda(\overline{n})\prod_{k=1}^{n'}|p_{n,\ldots,n-k+2,n-n'}(\overline{n})|^{-\lambda_{n-k+1}-\nu_k-s}\\
 \times\exp\left(-2\pi\sqrt{-1}\sum_{k=n'+1}^{n-1}p_{n,\ldots,n-k+2,n-k}(\overline{n})\right)\,d\overline{n}.
\end{multline*}
\end{lemma}

\begin{proof}
For most parts of the proof we refer to the previous section. We only remark that for $\overline{n}\in\overline{N}_G$ we have:
\begin{equation*}
 \Phi_k(\overline{n}w^{-1}) = 1, \qquad \Psi_k(\overline{n}w^{-1}) = p_{n,\ldots,n-k+2,n-n'}(\overline{n}), \qquad \Xi_k(\overline{n}w^{-1}) = p_{n,\ldots,n-k+2,n-k}(\overline{n}).\qedhere
\end{equation*}
\end{proof}

\subsection{A conjecture for the model invariant forms}

We conjecture that the special values of our model invariant forms at the spherical vectors behave, in the case of purely imaginary parameters, like the archimedean local $L$-factors $G_{\lambda,\nu}(s)$. More precisely:

\begin{conjecture}\label{conj:MatchingGfunctionModelPeriod}
For any $1\leq n'\leq n$ there exists a constant $C=C_{n,n'}>0$ such that for $\lambda\in(i\RR)^n$ and $\nu\in(i\RR)^{n'}$ with $\lambda_1+\cdots+\lambda_n=\nu_1+\cdots+\nu_{n'}=0$ and $s\in\frac{1}{2}+i\RR$ we have
\begin{enumerate}
\item for $n'=n$:
$$ |G_{\lambda,\nu}(s)| = C\cdot|\ell^{\rm mod}_{\lambda,\nu,s}(\phi_\lambda\otimes\psi_\nu\otimes f_{s-\frac{1}{2}})|. $$
\item for $1\leq n'\leq n-1$:
$$ |G_{\lambda,\nu}(s)| = C\cdot|\ell^{\rm mod}_{\lambda,\nu,s}(\phi_\lambda\otimes\psi_\nu)|. $$
\end{enumerate}
\end{conjecture}

This conjecture will be verified for $n=2$ in Section~\ref{sec:GL2} and for $n=3$ and $n'=1,2$ in Section~\ref{sec:GL3}.

\begin{corollary}
If Conjecture \ref{conj:MatchingGfunctionModelPeriod} holds for a pair $(n,n')$, $1\leq n'\leq n$, then there exists a constant $C=C_{n,n'}>0$ such that for all Maass forms $f$ and $g$ and all $s\in\frac{1}{2}+i\RR$:
$$ |L(s,f\times\overline{g})| = C\cdot|b_{f,\overline{g},s}|. $$
\end{corollary}

\begin{remark}
In a similar situation with $\Gamma_H\subseteq H$ cocompact, Bernstein--Reznikov \cite{BR04} apply \eqref{eq:Proportionality1} resp. \eqref{eq:Proportionality2} to test functions in order to estimate the proportionality constants. This method was also applied in \cite{FS18,MO17}. In our case $\SL(n',\ZZ)$ is not cocompact in $\SL(n',\RR)$, so that this method does not easily generalize. However, we do believe that a more detailed analysis of the geometry of the locally symmetric subspace $\Gamma_H\backslash H/K_H\subseteq\Gamma_G\backslash G/K_G$ does provide a way to modify the ideas of Bernstein--Reznikov.
\end{remark}

\section{Special values of model periods for $\GL(2)$}\label{sec:GL2}

In this section we verify Conjecture~\ref{conj:MatchingGfunctionModelPeriod} for $(n,n')=(2,2)$ and $(n,n')=(2,1)$. By \eqref{eq:ExplicitFormulaSphericalVector} we have:
$$ \phi_\lambda(\overline{n}) = (1+x^2)^{\frac{\lambda_2-\lambda_1-1}{2}}, \qquad \overline{n}=\begin{pmatrix}1&0\\x&1\end{pmatrix}\in\overline{N}_G=\overline{N}_{G,\max}. $$
The formula for $f_r$ is the same with $\lambda=(r,-r)$.

\subsection{Evaluation of the model period for $\GL(2)\times\GL(2)$}

By Lemma~\ref{lem:ModelIntegral1} we have
\begin{align*}
 \ell_{\lambda,\nu,r}^{\mod}(\phi_\lambda,\psi_\nu,f_r) &= \int_{\RR^2}|x-y|^{\lambda_1+\nu_2+r-\frac{1}{2}}(1+x^2)^{\frac{\lambda_2-\lambda_1-1}{2}}(1+y^2)^{-r-\frac{1}{2}}\,d(x,y)\\
 &= 2\int_\RR\int_y^\infty(x-y)^{\lambda_1+\nu_2+r-\frac{1}{2}}(1+x^2)^{\frac{\lambda_2-\lambda_1-1}{2}}(1+y^2)^{-r-\frac{1}{2}}\,dx\,dy
\end{align*}
The integral over $x$ can be evaluated using \eqref{eq:IntFormulaHypergeom1}:
\begin{equation*}
 \frac{\Gamma(\frac{\lambda_1+\nu_2+r+\frac{1}{2}}{2})\Gamma(\frac{-\lambda_2-\nu_2-r+\frac{1}{2}}{2})}{\Gamma(\frac{\lambda_1-\lambda_2+1}{2})}\int_\RR{_2F_1}(\tfrac{-\lambda_2-\nu_2-r+\frac{1}{2}}{2},\tfrac{-\lambda_1-\nu_2-r+\frac{1}{2}}{2};\tfrac{1}{2};-y^2)(1+y^2)^{-r-\frac{1}{2}}\,dy.
\end{equation*}
Substituting $z=y^2$ gives
\begin{equation*}
 \frac{2\Gamma(\frac{\lambda_1+\nu_2+r+\frac{1}{2}}{2})\Gamma(\frac{-\lambda_2-\nu_2-r+\frac{1}{2}}{2})}{\Gamma(\frac{\lambda_1-\lambda_2+1}{2})}\int_0^\infty z^{-\frac{1}{2}}(z+1)^{-r-\frac{1}{2}}{_2F_1}(\tfrac{-\lambda_2-\nu_2-r+\frac{1}{2}}{2},\tfrac{-\lambda_1-\nu_2-r+\frac{1}{2}}{2};\tfrac{1}{2};-z)\,dz.
\end{equation*}
Evaluating with \eqref{eq:IntFormulaHypergeom3} gives
\begin{equation*}
 \frac{2\Gamma(\frac{\lambda_1+\nu_2+r+\frac{1}{2}}{2})\Gamma(\frac{-\lambda_2-\nu_2-r+\frac{1}{2}}{2})\Gamma(\frac{1}{2})\Gamma(\frac{-\lambda_2-\nu_2+r+\frac{1}{2}}{2})\Gamma(\frac{-\lambda_1-\nu_2+r+\frac{1}{2}}{2})}{\Gamma(\frac{\lambda_1-\lambda_2+1}{2})\Gamma(r+\frac{1}{2})\Gamma(\tfrac{-\lambda_1-\lambda_2-2\nu_2+1}{2})}.
\end{equation*}
Since $\lambda_1+\lambda_2+\nu_1+\nu_2=0$ this gives
\begin{equation*}
 \frac{2\Gamma(\frac{\lambda_1+\nu_2+r+\frac{1}{2}}{2})\Gamma(\frac{-\lambda_2-\nu_2-r+\frac{1}{2}}{2})\Gamma(\frac{1}{2})\Gamma(\frac{\lambda_1+\nu_1+r+\frac{1}{2}}{2})\Gamma(\frac{\lambda_2+\nu_1+r+\frac{1}{2}}{2})}{\Gamma(\frac{\lambda_1-\lambda_2+1}{2})\Gamma(r+\frac{1}{2})\Gamma(\tfrac{\nu_1-\nu_2+1}{2})}.
\end{equation*}
We compare this expression to
$$ G_{\lambda,\nu}(s) = \frac{\pi^{\frac{\lambda_1-\lambda_2+\nu_1-\nu_2}{2}}\Gamma(\frac{\lambda_1+\nu_1+s}{2})\Gamma(\frac{\lambda_1+\nu_2+s}{2})\Gamma(\frac{\lambda_2+\nu_1+s}{2})\Gamma(\frac{\lambda_2+\nu_2+s}{2})}{2\pi^{s-1}\Gamma(s)\Gamma(\frac{\lambda_1-\lambda_2+1}{2})\Gamma(\frac{\nu_1-\nu_2+1}{2})} $$
and observe that they agree in absolute value up to a constant for $s=r+\frac{1}{2}\in\frac{1}{2}+i\RR$ and $\lambda,\nu\in(i\RR)^2$.

\subsection{Evaluation of the model period for $\GL(2)\times\GL(1)$}

By Lemma~\ref{lem:ModelIntegral2} we have
\begin{equation*}
 \ell_{\lambda,\nu,s}(\phi_\lambda\otimes\psi_\nu) = \int_\RR |x|^{-\lambda_2-\nu-s}(1+x^2)^{\frac{\lambda_2-\lambda_1-1}{2}}\,dx,
\end{equation*}
which, by \eqref{eq:IntFormulaHypergeom1}, evaluates to
$$ \frac{\Gamma(\frac{-\lambda_2-\nu-s+1}{2})\Gamma(\frac{\lambda_1+\nu+s}{2})}{\Gamma(\frac{\lambda_1-\lambda_2+1}{2})}. $$
We compare this expression to
$$ G_{\lambda,\nu}(s) = \frac{\pi^{\frac{\lambda_1-\lambda_2+1}{2}}\Gamma(\frac{\lambda_1+s}{2})\Gamma(\frac{\lambda_2+s}{2})}{2\pi^s\Gamma(\frac{\lambda_1-\lambda_2+1}{2})} $$
and observe that they agree in absolute value up to a constant for $\lambda\in(i\RR)^2$ and $\nu\in i\RR$ with $\lambda_1+\lambda_2=\nu=0$ and $s\in\frac{1}{2}+i\RR$.

\section{Special values of model periods for $\GL(3)$}\label{sec:GL3}

In this section we verify our Conjecture~\ref{conj:MatchingGfunctionModelPeriod} for $(n,n')=(3,2)$ and $(n,n')=(3,1)$. By \eqref{eq:ExplicitFormulaSphericalVector} we have:
$$ \phi_\lambda(\overline{n}) = (1+y^2+z^2)^{\frac{\lambda_3-\lambda_2-1}{2}}(1+x^2+(z-xy)^2)^{\frac{\lambda_2-\lambda_1-1}{2}}, \qquad \overline{n}=\begin{pmatrix}1&0&0\\x&1&0\\z&y&1\end{pmatrix}\in\overline{N}_G. $$

\subsection{Evaluation of the model period for $\GL(3)\times\GL(2)$}

By Lemma~\ref{lem:ModelIntegral1} we have
\begin{multline*}
 \ell_{\lambda,\nu,s}(\phi_\lambda\otimes\psi_\nu) = \int_{\RR^3}(1+y^2+z^2)^{\frac{\lambda_3-\lambda_2-1}{2}}(1+x^2+(z-xy)^2)^{\frac{\lambda_2-\lambda_1-1}{2}}\\
 \times|z|^{-\lambda_3-\nu_1-s}|x|^{-\lambda_2-\nu_2-s}\,d(x,y,z).
\end{multline*}
Since the integral is invariant under the substitution $(x,y)\mapsto(-x,-y)$, we may change $\int_{\RR^2}\,dx\,dy$ to $2\int_\RR\int_0^\infty\,dx\,dy$. Rewriting $1+x^2+(z-xy)^2=(1+y^2)((x-\frac{yz}{1+y^2})^2+\frac{1+y^2+z^2}{(1+y^2)^2})$ and substituting $x\mapsto x+\frac{yz}{1+y^2}$ yields
\begin{multline*}
 2\int_{\RR^2}\int_{\frac{yz}{1+y^2}}^\infty(1+y^2+z^2)^{\frac{\lambda_3-\lambda_2-1}{2}}(1+y^2)^{\frac{\lambda_2-\lambda_1-1}{2}}(x^2+\tfrac{1+y^2+z^2}{(1+y^2)^2})^{\frac{\lambda_2-\lambda_1-1}{2}}\\
 \times|z|^{-\lambda_3-\nu_1-s}(x-\tfrac{yz}{1+y^2})^{-\lambda_2-\nu_2-s}\,dx\,d(y,z).
\end{multline*}
The integral over $x$ can be computed using \eqref{eq:IntFormulaHypergeom1}:
\begin{multline*}
 \frac{\Gamma(\frac{-\lambda_2-\nu_2-s+1}{2})\Gamma(\frac{\lambda_1+\nu_2+s}{2})}{\Gamma(\frac{\lambda_1-\lambda_2+1}{2})}\int_{\RR^2}(1+y^2+z^2)^{\frac{\lambda_3-\lambda_2-\lambda_1-\nu_2-s-1}{2}}(1+y^2)^{\frac{\lambda_1+\lambda_2+2\nu_2+2s-1}{2}}\\
 \times|z|^{-\lambda_3-\nu_1-s}{_2F_1}(\tfrac{\lambda_1+\nu_2+s}{2},\tfrac{\lambda_2+\nu_2+s}{2};\frac{1}{2};-\tfrac{y^2z^2}{1+y^2+z^2})\,d(y,z).
\end{multline*}
Expanding the hypergeometric function with \eqref{eq:EulerIntegralRepresentation} gives
\begin{multline*}
 \frac{\Gamma(\frac{\lambda_1+\nu_2+s}{2})\Gamma(\frac{1}{2})}{\Gamma(\frac{\lambda_1-\lambda_2+1}{2})\Gamma(\frac{\lambda_2+\nu_2+s}{2})}\int_{\RR^2}\int_0^\infty(1+y^2+z^2)^{\frac{\lambda_3-\lambda_2-1}{2}}(1+y^2)^{\frac{\lambda_1+\lambda_2+2\nu_2+2s-1}{2}}|z|^{-\lambda_3-\nu_1-s}\\
 \times t^{\frac{\lambda_2+\nu_2+s-2}{2}}(1+t)^{\frac{\lambda_1+\nu_2+s-1}{2}}(1+y^2+z^2+t(1+y^2)(1+z^2))^{\frac{-\lambda_1-\nu_2-s}{2}}\,dt\,d(y,z).
\end{multline*}
The integral over $z$ can be computed using \eqref{eq:IntFormula2Quadratics}:
\begin{multline*}
 \frac{\Gamma(\frac{\lambda_1+\nu_2+s}{2})\Gamma(\frac{1}{2})\Gamma(\frac{-\lambda_3-\nu_1-s+1}{2})\Gamma(\frac{\lambda_1+\lambda_2+\nu_1+\nu_2+2s}{2})}{\Gamma(\frac{\lambda_1-\lambda_2+1}{2})\Gamma(\frac{\lambda_2+\nu_2+s}{2})\Gamma(\frac{\lambda_1+\lambda_2-\lambda_3+\nu_2+s+1}{2})}\int_\RR\int_0^\infty(1+y^2)^{\frac{\nu_2-\nu_1-1}{2}}t^{\frac{\lambda_2+\nu_2+s-2}{2}}(1+t)^{-\frac{1}{2}}\\
 \times{_2F_1}(\tfrac{\lambda_1+\nu_2+s}{2},\tfrac{-\lambda_3-\nu_1-s+1}{2};\tfrac{\lambda_1+\lambda_2-\lambda_3+\nu_2+s+1}{2};-\tfrac{t}{1+t}y^2)\,dt\,dy.
\end{multline*}
Substituting $s=\frac{t}{t+1}$ and $sy^2=x$ gives
\begin{multline*}
 \frac{\Gamma(\frac{\lambda_1+\nu_2+s}{2})\Gamma(\frac{1}{2})\Gamma(\frac{-\lambda_3-\nu_1-s+1}{2})\Gamma(\frac{\lambda_1+\lambda_2+\nu_1+\nu_2+2s}{2})}{\Gamma(\frac{\lambda_1-\lambda_2+1}{2})\Gamma(\frac{\lambda_2+\nu_2+s}{2})\Gamma(\frac{\lambda_1+\lambda_2-\lambda_3+\nu_2+s+1}{2})}\int_0^\infty\int_0^1x^{-\frac{1}{2}}(x+s)^{\frac{\nu_2-\nu_1-1}{2}}s^{\frac{\lambda_2+\nu_1+s-2}{2}}\\
 \times(1-s)^{\frac{-\lambda_2-\nu_2-s-1}{2}}{_2F_1}(\tfrac{\lambda_1+\nu_2+s}{2},\tfrac{-\lambda_3-\nu_1-s+1}{2};\tfrac{\lambda_1+\lambda_2-\lambda_3+\nu_2+s+1}{2};-x)\,ds\,dx.
\end{multline*}
The integral over $x$ can be computed using Lemma \ref{lem:IntF21}:
\begin{multline*}
 \frac{\Gamma(\frac{\lambda_1+\nu_2+s}{2})\Gamma(\frac{1}{2})^2\Gamma(\frac{-\lambda_3-\nu_1-s+1}{2})\Gamma(\frac{\lambda_1+\lambda_2+\nu_1+\nu_2+2s}{2})\Gamma(\frac{\nu_1-\nu_2}{2})}{\Gamma(\frac{\lambda_1-\lambda_2+1}{2})\Gamma(\frac{\lambda_2+\nu_2+s}{2})\Gamma(\frac{\lambda_1+\lambda_2-\lambda_3+\nu_2+s+1}{2})\Gamma(\frac{\nu_1-\nu_2+1}{2})}\int_0^1 s^{\frac{\lambda_2+\nu_2+s-2}{2}}(1-s)^{\frac{-\lambda_2-\nu_2-s-1}{2}}\\
 \times{_3F_2}(\tfrac{\lambda_1+\nu_2+s}{2},\tfrac{-\lambda_3-\nu_1-s+1}{2},\tfrac{1}{2};\tfrac{\lambda_1+\lambda_2-\lambda_3+\nu_2+s+1}{2},\tfrac{\nu_2-\nu_1+2}{2};s)\,ds\\
 +\frac{\Gamma(\frac{1}{2})\Gamma(\frac{\lambda_1+\lambda_2+\nu_1+\nu_2+2s}{2})\Gamma(\frac{\lambda_1+\nu_1+s}{2})\Gamma(\frac{-\lambda_3-\nu_2-s+1}{2})\Gamma(\frac{\nu_2-\nu_1}{2})}{\Gamma(\frac{\lambda_1-\lambda_2+1}{2})\Gamma(\frac{\lambda_2+\nu_2+s}{2})\Gamma(\frac{\lambda_1+\lambda_2-\lambda_3+\nu_1+s+1}{2})}\int_0^1 s^{\frac{\lambda_2+\nu_1+s-2}{2}}(1-s)^{\frac{-\lambda_2-\nu_2-s-1}{2}}\\
 \times{_3F_2}(\tfrac{\lambda_1+\nu_1+s}{2},\tfrac{-\lambda_3-\nu_2-s+1}{2},\tfrac{\nu_1-\nu_2+1}{2};\tfrac{\lambda_1+\lambda_2-\lambda_3+\nu_1+s+1}{2},\tfrac{\nu_1-\nu_2+2}{2};s)\,ds.
\end{multline*}
The remaining integrals can be evaluated using \eqref{eq:IntFormulaHypergeometric}:
\begin{multline*}
\frac{\Gamma(\frac{\lambda_1+\nu_2+s}{2})\Gamma(\frac{1}{2})\Gamma(\frac{-\lambda_3-\nu_1-s+1}{2})\Gamma(\frac{\lambda_1+\lambda_2+\nu_1+\nu_2+2s}{2})\Gamma(\frac{\nu_1-\nu_2}{2})\Gamma(\frac{-\lambda_2-\nu_2-s+1}{2})}{\Gamma(\frac{\lambda_1-\lambda_2+1}{2})\Gamma(\frac{\lambda_1+\lambda_2-\lambda_3+\nu_2+s+1}{2})\Gamma(\frac{\nu_1-\nu_2+1}{2})}\\
\times{_4F_3}(\tfrac{\lambda_1+\nu_2+s}{2},\tfrac{-\lambda_3-\nu_1-s+1}{2},\tfrac{1}{2},\tfrac{\lambda_2+\nu_2+s}{2};\tfrac{\lambda_1+\lambda_2-\lambda_3+\nu_2+s+1}{2},\tfrac{\nu_2-\nu_1+2}{2},\tfrac{1}{2};1)\\
+\frac{\Gamma(\frac{1}{2})\Gamma(\frac{\lambda_1+\lambda_2+\nu_1+\nu_2+2s}{2})\Gamma(\frac{\lambda_1+\nu_1+s}{2})\Gamma(\frac{-\lambda_3-\nu_2-s+1}{2})\Gamma(\frac{\nu_2-\nu_1}{2})\Gamma(\frac{\lambda_2+\nu_1+s}{2})\Gamma(\frac{-\lambda_2-\nu_2-s+1}{2})}{\Gamma(\frac{\lambda_1-\lambda_2+1}{2})\Gamma(\frac{\lambda_2+\nu_2+s}{2})\Gamma(\frac{\lambda_1+\lambda_2-\lambda_3+\nu_1+s+1}{2})\Gamma(\frac{\nu_1-\nu_2+1}{2})}\\
\times{_4F_3}(\tfrac{\lambda_1+\nu_1+s}{2},\tfrac{-\lambda_3-\nu_2-s+1}{2},\tfrac{\nu_1-\nu_2+1}{2},\tfrac{\lambda_2+\nu_1+s}{2};\tfrac{\lambda_1+\lambda_2-\lambda_3+\nu_1+s+1}{2},\tfrac{\nu_1-\nu_2+2}{2},\tfrac{\nu_1-\nu_2+1}{2};1).
\end{multline*}
Reducing both ${_4F_3}$'s to ${_3F_2}$'s and applying \eqref{eq:Trafo3F2At1} gives
\begin{multline*}
 \frac{\Gamma(\frac{1}{2})\Gamma(\frac{-\lambda_3-\nu_1-s+1}{2})\Gamma(\frac{\lambda_1+\lambda_2+\nu_1+\nu_2+2s}{2})\Gamma(\frac{-\lambda_2-\nu_2-s+1}{2})\Gamma(\frac{\lambda_2+\nu_1+s}{2})\Gamma(\frac{-\lambda_3-\nu_2-s+1}{2})\Gamma(\frac{\lambda_1+\nu_2+s}{2})}{\Gamma(\frac{\lambda_1-\lambda_2+1}{2})\Gamma(\frac{\nu_1-\nu_2+1}{2})\Gamma(\frac{\lambda_1+\lambda_2-\lambda_3+\nu_2+s+1}{2})\Gamma(\frac{\lambda_2-\lambda_3+1}{2})}\\
 \times{_3F_2}(\tfrac{\lambda_2+\nu_2+s}{2},\tfrac{-\lambda_3-\nu_1-s+1}{2},\tfrac{\lambda_2-\lambda_3+1}{2};\tfrac{\lambda_2-\lambda_3+1}{2},\tfrac{\lambda_1+\lambda_2-\lambda_3+\nu_2+s+1}{2};1).
\end{multline*}
Reducing the ${_3F_2}$ to an ${_2F_1}$ and evaluating it using \eqref{eq:SpecialValueHypergeometric} finally gives
\begin{equation*}
\frac{\Gamma(\frac{1}{2})\Gamma(\frac{-\lambda_3-\nu_1-s+1}{2})\Gamma(\frac{-\lambda_2-\nu_2-s+1}{2})\Gamma(\frac{\lambda_2+\nu_1+s}{2})\Gamma(\frac{-\lambda_3-\nu_2-s+1}{2})\Gamma(\frac{\lambda_1+\nu_2+s}{2})\Gamma(\frac{\lambda_1+\nu_1+s}{2})}{\Gamma(\frac{\lambda_1-\lambda_2+1}{2})\Gamma(\frac{\nu_1-\nu_2+1}{2})\Gamma(\frac{\lambda_2-\lambda_3+1}{2})\Gamma(\frac{\lambda_1-\lambda_3+1}{2})}.
\end{equation*}
We compare this expression to
$$ G_{\lambda,\nu}(s) = \frac{\pi^{\lambda_1-\lambda_3+\frac{\nu_1-\nu_2}{2}+2}\Gamma(\frac{\lambda_1+\nu_1+s}{2})\Gamma(\frac{\lambda_2+\nu_1+s}{2})\Gamma(\frac{\lambda_3+\nu_1+s}{2})\Gamma(\frac{\lambda_1+\nu_2+s}{2})\Gamma(\frac{\lambda_2+\nu_2+s}{2})\Gamma(\frac{\lambda_3+\nu_2+s}{2})}{4\pi^{3s}\Gamma(\frac{\lambda_1-\lambda_2+1}{2})\Gamma(\frac{\lambda_1-\lambda_3+1}{2})\Gamma(\frac{\lambda_2-\lambda_3+1}{2})\Gamma(\frac{\nu_1-\nu_2+1}{2})} $$
and observe that they agree in absolute value up to a constant for $\lambda\in(i\RR)^3$ and $\nu\in(i\RR)^2$ with $\lambda_1+\lambda_2+\lambda_3=\nu_1+\nu_2=0$ and $s\in\frac{1}{2}+i\RR$.

\subsection{Matching of the model period for $\GL(3)\times\GL(1)$}

By Lemma~\ref{lem:ModelIntegral1} we have
\begin{multline*}
 \ell_{\lambda,\nu,s}(\phi_\lambda\otimes\psi_\nu) = \int_{\RR^3}(1+y^2+z^2)^{\frac{\lambda_3-\lambda_2-1}{2}}(1+x^2+(z-xy)^2)^{\frac{\lambda_2-\lambda_1-1}{2}}|y|^{-\lambda_3-\nu-s}\\
 \times e^{-2\pi\sqrt{-1}x}\,d(x,y,z).
\end{multline*}
We first note that the integrand is invariant under the transformation $(x,y,z)\mapsto(x,-y,-z)$, so that we may replace $\int_\RR dy$ by $2\int_0^\infty dy$. We then substitute $z\mapsto z+xy$, write $1+y^2+(z+xy)^2=(1+x^2)[(y+\frac{xz}{1+x^2})^2+\frac{1+x^2+z^2}{(1+x^2)^2}]$ and substitute $y\mapsto y-\frac{xz}{1+x^2}$ to obtain
\begin{multline*}
 = 2\int_{\RR^2}e^{-2\pi\sqrt{-1}x}(1+x^2)^{-\frac{\lambda_2-\lambda_3+1}{2}}(1+x^2+z^2)^{-\frac{\lambda_1-\lambda_2+1}{2}}\\
 \times\int_{\frac{xz}{1+x^2}}^\infty(y-\tfrac{xz}{1+x^2})^{-\lambda_3-\nu-s}(y^2+\tfrac{1+x^2+z^2}{(1+x^2)^2})^{-\frac{\lambda_2-\lambda_3+1}{2}}\,dy\,d(x,z).
\end{multline*}
The inner integral can be evaluated using \eqref{eq:IntFormulaHypergeom1}. Note that the second summand does not contribute to the integral, because it is an odd function of $z$ whereas the remaining terms are even functions of $z$. We therefore obtain
\begin{multline*}
 = \frac{\Gamma(\frac{-\lambda_3-\nu-s+1}{2})\Gamma(\frac{\lambda_2+\nu+s}{2})}{\Gamma(\frac{\lambda_2-\lambda_3+1}{2})}\int_{\RR^2}e^{-2\pi\sqrt{-1}x}(1+x^2)^{\frac{\lambda_2+\lambda_3+2\nu+2s-1}{2}}\\
 \times(1+x^2+z^2)^{-\frac{\lambda_1+\nu+s+1}{2}}{_2F_1}(\tfrac{\lambda_2+\nu+s}{2},\tfrac{\lambda_3+\nu+s}{2};\tfrac{1}{2};-\tfrac{x^2z^2}{1+x^2+z^2})\,d(x,z).
\end{multline*}
By the integral representation \eqref{eq:IntFormulaHypergeom2} for the hypergeometric function this equals
\begin{multline*}
 \frac{\Gamma(\frac{\lambda_2+\nu+s}{2})\Gamma(\frac{1}{2})}{\Gamma(\frac{\lambda_2-\lambda_3+1}{2})\Gamma(\frac{\lambda_3+\nu+s}{2})}\int_{\RR^2}\int_0^1 e^{-2\pi\sqrt{-1}x}(1+x^2)^{\frac{\lambda_2+\lambda_3+2\nu+2s-1}{2}}\\
 \times(1+x^2+z^2)^{-\frac{\lambda_1+\nu+s+1}{2}}t^{\frac{\lambda_3+\nu+s-2}{2}}(1-t)^{\frac{-\lambda_3-\nu-s-1}{2}}(1+\tfrac{x^2z^2}{1+x^2+z^2}t)^{-\frac{\lambda_2+\nu+s}{2}}\,dt\,d(x,z).
\end{multline*}
Rearranging terms this can be written as
\begin{multline*}
 \frac{\Gamma(\frac{\lambda_2+\nu+s}{2})\Gamma(\frac{1}{2})}{\Gamma(\frac{\lambda_2-\lambda_3+1}{2})\Gamma(\frac{\lambda_3+\nu+s}{2})}\int_{\RR^2}\int_0^1 e^{-2\pi\sqrt{-1}x}(1+x^2)^{\frac{-\lambda_1+\lambda_2+\lambda_3+\nu+s-2}{2}}\\
 \times(1+\tfrac{1}{1+x^2}z^2)^{\frac{\lambda_2-\lambda_1-1}{2}}t^{\frac{\lambda_3+\nu+s-2}{2}}(1-t)^{\frac{-\lambda_3-\nu-s-1}{2}}(1+\tfrac{1+tx^2}{1+x^2}z^2)^{-\frac{\lambda_2+\nu+s}{2}}\,dt\,d(x,z).
\end{multline*}
The integral over $z$ can be computed using \eqref{eq:IntFormula2Quadratics}:
\begin{multline*}
 \frac{\Gamma(\frac{\lambda_2+\nu+s}{2})\Gamma(\frac{1}{2})^2\Gamma(\frac{\lambda_1+\nu+s}{2})}{\Gamma(\frac{\lambda_2-\lambda_3+1}{2})\Gamma(\frac{\lambda_3+\nu+s}{2})\Gamma(\frac{\lambda_1+\nu+s+1}{2})}\int_\RR\int_0^1 e^{-2\pi\sqrt{-1}x}(1+x^2)^{\frac{-\lambda_1+\lambda_2+\lambda_3+\nu+s-1}{2}}\\
 \times t^{\frac{\lambda_3+\nu+s-2}{2}}(1-t)^{\frac{-\lambda_3-\nu-s-1}{2}}{_2F_1}(\tfrac{\lambda_2+\nu+s}{2},\tfrac{1}{2};\tfrac{\lambda_1+\nu+s+1}{2};-tx^2)\,dt\,dx.
\end{multline*}
and the integral over $t$ can be computed using \eqref{eq:IntFormulaHypergeometric}:
\begin{multline*}
 \frac{\Gamma(\frac{\lambda_2+\nu+s}{2})\Gamma(\frac{1}{2})\Gamma(\frac{\lambda_1+\nu+s}{2})\Gamma(\frac{-\lambda_3-\nu-s+1}{2})}{\Gamma(\frac{\lambda_2-\lambda_3+1}{2})\Gamma(\frac{\lambda_1+\nu+s+1}{2})}\int_\RR e^{-2\pi\sqrt{-1}x}(1+x^2)^{\frac{-\lambda_1+\lambda_2+\lambda_3+\nu+s-1}{2}}\\
 \times {_3F_2}(\tfrac{\lambda_2+\nu+s}{2},\tfrac{1}{2},\tfrac{\lambda_3+\nu+s}{2};\tfrac{\lambda_1+\nu+s+1}{2},\tfrac{1}{2};-x^2)\,dx.
\end{multline*}
Reducing the hypergeometric function ${_3F_2}$ to ${_2F_1}$, applying the transformation formula \eqref{eq:HypergeomTrafo} and replacing $\int_\RR e^{-2\pi\sqrt{-1}x}\,dx$ by $2\int_0^\infty\cos(2\pi x)\,dx$ we finally get
\begin{multline*}
 \frac{2\Gamma(\frac{\lambda_2+\nu+s}{2})\Gamma(\frac{1}{2})\Gamma(\frac{\lambda_1+\nu+s}{2})\Gamma(\frac{-\lambda_3-\nu-s+1}{2})}{\Gamma(\frac{\lambda_2-\lambda_3+1}{2})\Gamma(\frac{\lambda_1+\nu+s+1}{2})}\\
 \times\int_0^\infty \cos(2\pi x) {_2F_1}(\tfrac{\lambda_1-\lambda_2+1}{2},\tfrac{\lambda_1-\lambda_3+1}{2};\tfrac{\lambda_1+\nu+s+1}{2};-x^2)\,dx.
\end{multline*}
The integral can be evaluated in terms of a $G$-function by \eqref{eq:IntCosHypergeometric}:
\begin{equation*}
 \frac{\Gamma(\frac{\lambda_2+\nu+s}{2})\Gamma(\frac{1}{2})^2\Gamma(\frac{\lambda_1+\nu+s}{2})\Gamma(\frac{-\lambda_3-\nu-s+1}{2})}{\pi\Gamma(\frac{\lambda_2-\lambda_3+1}{2})\Gamma(\frac{\lambda_1-\lambda_2+1}{2})\Gamma(\frac{\lambda_1-\lambda_3+1}{2})}G^{30}_{13}\left(\pi^2\Big|\begin{array}{l}\tfrac{\lambda_1+\nu+s+1}{2}\\\tfrac{1}{2},\tfrac{\lambda_1-\lambda_2+1}{2},\tfrac{\lambda_1-\lambda_3+1}{2}\end{array}\right).
\end{equation*}
Writing the $G$-function as a Mellin--Barnes type integral and shifting the contour shows
\begin{multline*}
 \frac{\pi^{\lambda_1+1}\Gamma(\frac{\lambda_2+\nu+s}{2})\Gamma(\frac{1}{2})^2\Gamma(\frac{\lambda_1+\nu+s}{2})\Gamma(\frac{-\lambda_3-\nu-s+1}{2})}{2\pi\Gamma(\frac{\lambda_2-\lambda_3+1}{2})\Gamma(\frac{\lambda_1-\lambda_2+1}{2})\Gamma(\frac{\lambda_1-\lambda_3+1}{2})}\\
 \times\frac{1}{2\pi\sqrt{-1}}\int_\gamma \frac{\Gamma(\frac{z-\lambda_1}{2})\Gamma(\frac{z-\lambda_2}{2})\Gamma(\frac{z-\lambda_3}{2})}{\Gamma(\frac{z+s+\nu}{2})}\,\pi^{-z}\,dz.
\end{multline*}
We compare this expression to
$$ G_{\lambda,\nu}(s) = \frac{\pi^{\lambda_1-\lambda_3+\frac{3}{2}}\Gamma(\frac{\lambda_1+\nu+s}{2})\Gamma(\frac{\lambda_2+\nu+s}{2})\Gamma(\frac{\lambda_3+\nu+s}{2})}{4\pi^{s}\Gamma(\frac{\lambda_1-\lambda_2+1}{2})\Gamma(\frac{\lambda_1-\lambda_3+1}{2})\Gamma(\frac{\lambda_2-\lambda_3+1}{2})} \frac{1}{2\pi\sqrt{-1}}\int_\gamma \frac{\Gamma(\frac{z-\lambda_1}{2})\Gamma(\frac{z-\lambda_2}{2})\Gamma(\frac{z-\lambda_3}{2})}{\Gamma(\frac{s+z+\nu}{2})}\pi^{-z}\,dz $$
and observe that they agree in absolute value up to a constant for $\lambda\in(i\RR)^3$ and $\nu\in(i\RR)^2$ with $\lambda_1+\lambda_2+\lambda_3=\nu_1+\nu_2=0$ and $s\in\frac{1}{2}+i\RR$.

\appendix

\section{Integral formulas}\label{app:IntFormulas}

We collect some integral formulas for the hypergeometric function and Meijer's $G$-function.

\subsection{Hypergeometric function}

By \cite[equation 7.512~(10)]{GR07} we have for $\Re\gamma,\Re(\alpha-\gamma+\sigma),\Re(\beta-\gamma+\sigma)>0$ and $|\arg z|<\frac{\pi}{2}$:
\begin{multline}
 \int_0^\infty x^{\gamma-1}(x+z)^{-\sigma}{_2F_1}(\alpha,\beta;\gamma;-x)\,dx = \frac{\Gamma(\gamma)\Gamma(\alpha-\gamma+\sigma)\Gamma(\beta-\gamma+\sigma)}{\Gamma(\sigma)\Gamma(\alpha+\beta-\gamma+\sigma)}\\
 \times{_2F_1}(\alpha-\gamma+\sigma,\beta-\gamma+\sigma;\alpha+\beta-\gamma+\sigma;1-z).\label{eq:IntFormulaHypergeom3}
\end{multline}

The following integral formula holds for $|u|>|\beta|$ and $0<\Re\mu<-2\Re\nu$ (see \cite[equation 3.254~(2)]{GR07} for $\lambda=0$):
\begin{multline}
 \int_u^\infty (x-u)^{\mu-1}(x^2+\beta^2)^\nu\,dx = B(\mu,-\mu-2\nu)u^{\mu+2\nu}\\
 \times{_2F_1}\left(-\frac{\mu}{2}-\nu,\frac{1-\mu}{2}-\nu;\frac{1}{2}-\nu;-\tfrac{\beta^2}{u^2}\right).\label{eq:IntRepresentation3F2}
\end{multline}
Using the relation (see \cite[Theorem 2.3.2]{AAR99})
\begin{multline*}
 {_2F_1}(a,b;c;x) = \frac{\Gamma(c)\Gamma(b-a)}{\Gamma(c-a)\Gamma(b)}(-x)^{-a}{_2F_1}(a,a-c+1;a-b+1;x^{-1})\\
 +\frac{\Gamma(c)\Gamma(a-b)}{\Gamma(c-b)\Gamma(a)}(-x)^{-b}{_2F_1}(b,b-c+1;b-a+1;x^{-1})
\end{multline*}
the integral formula in \eqref{eq:IntRepresentation3F2} can be extended to $u\in\RR$, $\beta>0$, by analytic continuation:
\begin{multline}
 \int_u^\infty (x-u)^{\mu-1}(x^2+\beta^2)^\nu\,dx = \frac{\Gamma(\frac{\mu}{2})\Gamma(-\frac{\mu}{2}-\nu)}{2\Gamma(-\nu)}\beta^{\mu+2\nu}{_2F_1}(-\tfrac{\mu}{2}-\nu,\tfrac{1-\mu}{2};\tfrac{1}{2};-\tfrac{u^2}{\beta^2})\\
 -\frac{\Gamma(\frac{\mu+1}{2})\Gamma(\frac{1-\mu}{2}-\nu)}{\Gamma(-\nu)}\beta^{\mu+2\nu-1}u\cdot{_2F_1}(\tfrac{1-\mu}{2}-\nu,\tfrac{2-\mu}{2};\tfrac{3}{2};-\tfrac{u^2}{\beta^2}).\label{eq:IntFormulaHypergeom1}
\end{multline}

For $\Re c>\Re b>0$ the following integral representation holds (see \cite[Theorem 2.2.1]{AAR99}):
\begin{equation}
 {_2F_1}(a,b;c;x) = \frac{\Gamma(c)}{\Gamma(b)\Gamma(c-b)}\int_0^1 t^{b-1}(1-t)^{c-b-1}(1-xt)^{-a}\,dt.\label{eq:IntFormulaHypergeom2}
\end{equation}
The Euler integral representation holds for $\Re(\gamma-\beta),\Re\beta>0$ (see \cite[equation (2.3.17)]{AAR99}):
\begin{equation}
 {_2F_1}(\alpha,\beta;\gamma;1-x) = \frac{\Gamma(\gamma)}{\Gamma(\gamma-\beta)\Gamma(\beta)}\int_0^\infty t^{\beta-1}(1+t)^{\alpha-\gamma}(1+xt)^{-\alpha}\,dt.\label{eq:EulerIntegralRepresentation}
\end{equation}

The following transformation formula holds (see \cite[Theorem 2.2.5]{AAR99}):
\begin{equation}
 {_2F_1}(a,b;c;x) = (1-x)^{c-a-b}{_2F_1}(c-a,c-b;c;x).\label{eq:HypergeomTrafo}
\end{equation}

The following integral formula holds for $\alpha,\beta>0$ and $0<\Re\lambda<2\Re(\mu+\nu)$ (see \cite[3.259~(3)]{GR07}):
\begin{multline}
 \int_0^\infty x^{\lambda-1}(1+\alpha x^2)^{-\mu}(1+\beta x^2)^{-\nu}\,dx\\
 = \frac{1}{2}\alpha^{-\frac{\lambda}{2}}B\left(\frac{\lambda}{2},\mu+\nu-\frac{\lambda}{2}\right){_2F_1}\left(\nu,\frac{\lambda}{2};\mu+\nu;1-\frac{\beta}{\alpha}\right).\label{eq:IntFormula2Quadratics}
\end{multline}

The following integral formula holds for $\Re\mu,\Re\nu>0$ (see \cite[7.512~(12)]{GR07}):
\begin{multline}
 \int_0^1 t^{\mu-1}(1-t)^{\nu-1}{_pF_q}(a_1,\ldots,a_p;b_1,\ldots,b_q;tx)\,dt\\
 =\frac{\Gamma(\mu)\Gamma(\nu)}{\Gamma(\mu+\nu)}{_{p+1}F_{q+1}}(a_1,\ldots,a_p,\mu;b_1,\ldots,b_q,\mu+\nu;x). \label{eq:IntFormulaHypergeometric}
\end{multline}

\begin{lemma}\label{lem:IntF21}
For $\Re\rho,\Re(\alpha-\sigma-\rho+1),\Re(\beta-\sigma-\rho+1)>0$ and $u>0$ we have
\begin{multline*}
 \int_0^\infty x^{\rho-1}(x+u)^{\sigma-1}{_2F_1}(\alpha,\beta;\gamma;-x)\,dx = \frac{\Gamma(\rho)\Gamma(1-\sigma-\rho)}{\Gamma(1-\sigma)}u^{\rho+\sigma-1}{_3F_2}(\alpha,\beta,\rho;\gamma,\sigma+\rho;u)\\
 +\frac{\Gamma(\gamma)\Gamma(\alpha-\sigma-\rho+1)\Gamma(\beta-\sigma-\rho+1)\Gamma(\sigma+\rho-1)}{\Gamma(\beta)\Gamma(\alpha)\Gamma(\gamma-\sigma-\rho+1)}\\
 \times{_3F_2}(\alpha-\sigma-\rho+1,\beta-\sigma-\rho+1,1-\sigma;\gamma-\sigma-\rho+1,2-\sigma-\rho;u).
\end{multline*}
\end{lemma}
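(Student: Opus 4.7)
The plan is to reduce the integral to a Barnes-type contour integral and then evaluate it by summing residues. I would start from the Mellin--Barnes representation
$$ {_2F_1}(\alpha,\beta;\gamma;-x) = \frac{\Gamma(\gamma)}{\Gamma(\alpha)\Gamma(\beta)}\cdot\frac{1}{2\pi i}\int_{(c)} \frac{\Gamma(\alpha+s)\Gamma(\beta+s)\Gamma(-s)}{\Gamma(\gamma+s)}\,x^s\,ds, $$
valid on a vertical line $(c)$ that separates the poles of $\Gamma(\alpha+s)\Gamma(\beta+s)$ on the left from those of $\Gamma(-s)$ on the right. Substituting this into the left-hand side and applying Fubini (justified by absolute convergence, via Stirling bounds on the Barnes integrand) reduces the inner $x$-integral to the standard Beta-type formula
$$ \int_0^\infty x^{\rho+s-1}(x+u)^{\sigma-1}\,dx = u^{\rho+\sigma+s-1}\,\frac{\Gamma(\rho+s)\Gamma(1-\sigma-\rho-s)}{\Gamma(1-\sigma)}, $$
valid on the strip $-\Re\rho<\Re s<1-\Re\sigma-\Re\rho$ and extending meromorphically otherwise.

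The original integral is thereby recast as
$$ \frac{\Gamma(\gamma)\,u^{\rho+\sigma-1}}{\Gamma(\alpha)\Gamma(\beta)\Gamma(1-\sigma)}\cdot\frac{1}{2\pi i}\int_{(c')} \frac{\Gamma(\alpha+s)\Gamma(\beta+s)\Gamma(\rho+s)\Gamma(-s)\Gamma(1-\sigma-\rho-s)}{\Gamma(\gamma+s)}\,u^s\,ds, $$
where $(c')$ is chosen---thanks to the hypotheses $\Re\rho>0$ and $\Re(\alpha-\sigma-\rho+1),\Re(\beta-\sigma-\rho+1)>0$---so that the poles of $\Gamma(\alpha+s)\Gamma(\beta+s)\Gamma(\rho+s)$ lie strictly to its left and those of $\Gamma(-s)\Gamma(1-\sigma-\rho-s)$ strictly to its right. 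For $0<u<1$, I would then close the contour to the right; the combination of Stirling asymptotics with $|u^s|=u^{\Re s}\to 0$ as $\Re s\to+\infty$ kills the arc contribution, so the integral equals the negative of the sum of the residues at $s=n$ (from $\Gamma(-s)$) and at $s=1-\sigma-\rho+n$ (from $\Gamma(1-\sigma-\rho-s)$) for $n\geq 0$.

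Summing the first family of residues and using $\Gamma(1-\sigma-\rho-n)=(-1)^n\Gamma(1-\sigma-\rho)/(\sigma+\rho)_n$ together with $\Gamma(a+n)=\Gamma(a)(a)_n$, the contribution collapses to $\frac{\Gamma(\rho)\Gamma(1-\sigma-\rho)}{\Gamma(1-\sigma)}u^{\rho+\sigma-1}\,{_3F_2}(\alpha,\beta,\rho;\gamma,\sigma+\rho;u)$, which is the first summand on the right-hand side. For the residues at $s=1-\sigma-\rho+n$, the analogous reflection $\Gamma(\sigma+\rho-1-n)=(-1)^n\Gamma(\sigma+\rho-1)/(2-\sigma-\rho)_n$ applies; the power $u^{1-\sigma-\rho+n}$ combines with the prefactor $u^{\rho+\sigma-1}$ to give exactly $u^n$, and the remaining gamma quotients assemble into the coefficient of the second ${_3F_2}$ claimed in the lemma. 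The identity, initially proved for $0<u<1$, extends to the stated range by analytic continuation in $u$.

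The main obstacle is the bookkeeping in the second residue sum: after shifting the summation index, one must carefully track alternating signs and Pochhammer shifts to identify the resulting series with the specific ${_3F_2}$ of parameters $(\alpha-\sigma-\rho+1,\beta-\sigma-\rho+1,1-\sigma;\gamma-\sigma-\rho+1,2-\sigma-\rho)$ and to recover the prescribed ratio of gamma factors. Aside from this careful but routine calculation, each step of the plan is a direct application of the residue theorem and standard reflection identities for the Gamma function.
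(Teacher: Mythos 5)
Your argument is correct, and it checks out in detail (I verified both residue families: the poles of $\Gamma(-s)$ at $s=n$ reproduce the first term $\frac{\Gamma(\rho)\Gamma(1-\sigma-\rho)}{\Gamma(1-\sigma)}u^{\rho+\sigma-1}{_3F_2}(\alpha,\beta,\rho;\gamma,\sigma+\rho;u)$, and the poles of $\Gamma(1-\sigma-\rho-s)$ at $s=1-\sigma-\rho+n$ give exactly the second ${_3F_2}$ with the stated gamma prefactor, the power $u^{1-\sigma-\rho+n}$ cancelling against $u^{\rho+\sigma-1}$ as you say). However, it is a genuinely different route from the paper's. The paper stays entirely with real-variable manipulations: it inserts the Euler integral representation for ${_2F_1}(\alpha,\beta;\gamma;-x)$, rescales $x\mapsto ux$, evaluates the $x$-integral again by \eqref{eq:EulerIntegralRepresentation} to produce a ${_2F_1}$ of argument $1-tu$, applies the Gauss connection formula between arguments $z$ and $1-z$ (this is where the two-term structure enters), and finally integrates term-by-term in $t$ via \eqref{eq:Integral01Fpq} to assemble the two ${_3F_2}$'s. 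Your Mellin--Barnes approach buys structural transparency: the two ${_3F_2}$'s appear automatically as the two families of right-hand poles, and the method generalizes immediately to ${_pF_q}$ kernels; the paper's approach buys elementarity (no contour estimates, only tabulated integral formulas) and keeps the convergence hypotheses visibly aligned with each step.

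Two small caveats you should patch. First, under the bare hypotheses $\Re\rho>0$, $\Re(\alpha-\sigma-\rho+1)>0$, $\Re(\beta-\sigma-\rho+1)>0$, a \emph{straight} vertical line with all poles of $\Gamma(\alpha+s)\Gamma(\beta+s)\Gamma(\rho+s)$ to its left and all poles of $\Gamma(-s)\Gamma(1-\sigma-\rho-s)$ to its right need not exist (e.g.\ if $\Re\alpha\leq0$ or $\Re\sigma\geq1$ the required strip $\max(-\Re\alpha,-\Re\beta,-\Re\rho)<c'<\min(0,\,1-\Re\sigma-\Re\rho)$ is empty); either indent the contour in the usual Barnes fashion, or prove the identity first on the open parameter subset where the straight line exists and parameters are generic (no pole collisions), and then extend by analytic continuation in $(\alpha,\beta,\gamma,\rho,\sigma)$ — both sides are analytic on the stated domain, the left side because the hypotheses are exactly the convergence conditions of the integral. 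Second, the two ${_3F_2}$ series converge only for $|u|<1$, so "extends to $u>0$ by analytic continuation" should be read as interpreting the right-hand side via continuation for $u\geq1$; this looseness is already present in the lemma as stated, and in the paper the lemma is only applied with $u=\tfrac{s}{1+s}\in(0,1)$, so your proof of the identity on $0<u<1$ suffices for the application.
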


Note that the integral in Lemma~\ref{lem:IntF21} is more general than the one in \cite[equation 7.512~(10)]{GR07}, which corresponds to $\rho=\gamma$.

\begin{proof}
We make use of the integral representation (for $\Re c>\Re b>0$, see \cite[Theorem 2.2.1]{AAR99})
$$ {_2F_1}(a,b;c;x) = \frac{\Gamma(c)}{\Gamma(b)\Gamma(c-b)}\int_0^1 t^{b-1}(1-t)^{c-b-1}(1-xt)^{-a}\,dx $$
and the transformation formula (see \cite[Theorem 2.3.2]{AAR99})
\begin{multline*}
 {_2F_1}(a,b;c;x) = \frac{\Gamma(c)\Gamma(c-a-b)}{\Gamma(c-a)\Gamma(c-b)}{_2F_1}(a,b,a+b-c+1;1-x)\\
 + \frac{\Gamma(a+b-c)\Gamma(c)}{\Gamma(a)\Gamma(b)}(1-x)^{c-a-b}{_2F_1}(c-a,c-b;c-a-b+1;1-x).
\end{multline*}
First, substituting $x\mapsto ux$ and using the integral representation we find
\begin{multline*}
 \int_0^\infty x^{\rho-1}(x+u)^{\sigma-1}{_2F_1}(\alpha,\beta;\gamma;-x)\,dx\\
 = \frac{\Gamma(\gamma)}{\Gamma(\gamma-\beta)\Gamma(\beta)}u^{\rho+\sigma-1}\int_0^\infty\int_0^1 t^{\beta-1}(1-t)^{\gamma-\beta-1}x^{\rho-1}(1+x)^{\sigma-1}(1+tux)^{-\alpha}\,dt\,dx.
\end{multline*}
Next, we compute the integral over $x$ with \eqref{eq:EulerIntegralRepresentation}:
$$ = \frac{\Gamma(\gamma)\Gamma(\alpha-\sigma-\rho+1)\Gamma(\rho)}{\Gamma(\gamma-\beta)\Gamma(\beta)\Gamma(\alpha-\sigma+1)}u^{\rho+\sigma-1}\int_0^1 t^{\beta-1}(1-t)^{\gamma-\beta-1}{_2F_1}(\alpha,\rho;\alpha-\sigma+1;1-tu)\,dt. $$
Apply the transformation formula:
\begin{multline*}
 = \frac{\Gamma(\gamma)\Gamma(\rho)\Gamma(1-\sigma-\rho)}{\Gamma(\gamma-\beta)\Gamma(\beta)\Gamma(1-\sigma)}u^{\rho+\sigma-1}\int_0^1 t^{\beta-1}(1-t)^{\gamma-\beta-1}{_2F_1}(\alpha,\rho;\sigma+\rho;tu)\,dt\\
 +\frac{\Gamma(\gamma)\Gamma(\alpha-\sigma-\rho+1)\Gamma(\sigma+\rho-1)}{\Gamma(\gamma-\beta)\Gamma(\beta)\Gamma(\alpha)}\\
 \times\int_0^1 t^{\beta-\sigma-\rho}(1-t)^{\gamma-\beta-1}{_2F_1}(1-\sigma,\alpha-\sigma-\rho+1;2-\sigma-\rho;tu)\,dt
\end{multline*}
and finally \eqref{eq:IntFormulaHypergeometric}:
\begin{multline*}
 = \frac{\Gamma(\rho)\Gamma(1-\sigma-\rho)}{\Gamma(1-\sigma)}u^{\rho+\sigma-1}{_3F_2}(\alpha,\beta,\rho;\gamma,\sigma+\rho;u)\\
 +\frac{\Gamma(\gamma)\Gamma(\alpha-\sigma-\rho+1)\Gamma(\beta-\sigma-\rho+1)\Gamma(\sigma+\rho-1)}{\Gamma(\beta)\Gamma(\alpha)\Gamma(\gamma-\sigma-\rho+1)}\\
 \times{_3F_2}(\alpha-\sigma-\rho+1,\beta-\sigma-\rho+1,1-\sigma;\gamma-\sigma-\rho+1,2-\sigma-\rho;u).\qedhere
\end{multline*}
\end{proof}

For the special value of the generalized hypergeometric function ${_3F_2}$ at $x=1$ we have the following transformation formula, which holds for $\Re(d+e-a-b-c),\Re(c-d+1)>0$ (see \cite[Theorem 2.4.4]{AAR99}):
\begin{multline}
 {_3F_2}(a,b,c;d,e;1) = \frac{\Gamma(d)\Gamma(d-a-b)}{\Gamma(d-a)\Gamma(d-b)}{_3F_2}(a,b,e-c;e,a+b-d+1;1)\\
 +\frac{\Gamma(d)\Gamma(e)\Gamma(d+e-a-b-c)\Gamma(a+b-d)}{\Gamma(a)\Gamma(b)\Gamma(d+e-a-b)\Gamma(e-c)}\\
 \times{_3F_2}(d-a,d-b,d+e-a-b-c;d+e-a-b,d-a-b+1;1)\label{eq:Trafo3F2At1}
\end{multline}

For $\Re(\gamma-\alpha-\beta)>0$ the special value of ${_2F_1}$ at $x=1$ is given by (see \cite[Theorem 2.2.2]{AAR99})
\begin{equation}
 {_2F_1}(\alpha,\beta;\gamma;1) = \frac{\Gamma(\gamma)\Gamma(\gamma-\alpha-\beta)}{\Gamma(\gamma-\alpha)\Gamma(\gamma-\beta)}.\label{eq:SpecialValueHypergeometric}
\end{equation}

\subsection{Meijer's $G$-function}\label{app:GFunction}

For $\Re(a)<1$ and $\Re(b_1),\Re(b_2),\Re(b_3)>0$:
$$ G^{30}_{13}\Big(z\Big|\begin{array}{l}a_1\\b_1,b_2,b_3\end{array}\Big) = \frac{1}{2\pi\sqrt{-1}} \int_\RR \frac{\Gamma(b_1+is)\Gamma(b_2+is)\Gamma(b_3+is)}{\Gamma(a_1+is)}z^{-is} \,ds. $$
We have the following integral representation for $-1<\Re\nu<2\max(\Re\alpha,\Re\beta)-\frac{3}{2}$, $y>0$ (see \cite[8.17~(5)]{EMOT54b}):
\begin{equation}
 \int_0^\infty\cos(xy){_2F_1}(\alpha,\beta;\gamma;-x^2)\,dx = \frac{\Gamma(\frac{1}{2})\Gamma(\gamma)}{\Gamma(\alpha)\Gamma(\beta)}y^{-1}G^{30}_{13}\Big(\big(\tfrac{y}{2}\big)^2\Big|\begin{array}{l}\gamma\\\frac{1}{2},\alpha,\beta\end{array}\Big).\label{eq:IntCosHypergeometric}
\end{equation}

\begin{lemma}\label{lem:IntKBessel}
$$ \int_1^\infty K_\nu(ax)(x^2-1)^\lambda x^\mu\,dx = 2^{-\nu-1}\Gamma(\lambda+1)a^{\nu-1}G^{30}_{13}\Big(\big(\tfrac{a}{2}\big)^2\Big|\begin{array}{l}-\frac{\mu+\nu-2}{2}\\\frac{1}{2},\tfrac{1}{2}-\nu,-\lambda-\tfrac{\mu+\nu}{2}\end{array}\Big). $$
\end{lemma}

\begin{proof}
By \cite[3.771~(2)]{GR07} we have for $a>0$, $\Re x>0$ and $\Re\nu<\frac{1}{2}$
$$ \int_0^\infty(x^2+y^2)^{\nu-\frac{1}{2}}\cos(ay)\,dy = \frac{1}{\sqrt{\pi}}\Big(\frac{2x}{a}\Big)^\nu\cos(\pi\nu)\Gamma(\nu+\frac{1}{2})K_\nu(ax). $$
Multiplying with $(x^2-1)^\lambda x^{\mu-\nu}$ and integrating over $(1,\infty)$ gives
\begin{multline*}
 \int_1^\infty K_\nu(ax)(x^2-1)^\lambda x^\mu\,dx = \frac{2^{-\nu}\sqrt{\pi}}{\cos(\pi\nu)\Gamma(\nu+\frac{1}{2})}a^\nu\\
 \times\int_1^\infty(x^2-1)^\lambda x^{\mu-\nu}\int_0^\infty(x^2+y^2)^{\nu-\frac{1}{2}}\cos(ay)\,dy\,dx.
\end{multline*}
Interchanging the order of integration and substituting $x=t^{-\frac{1}{2}}$ gives
$$ = \frac{2^{-\nu-1}\sqrt{\pi}}{\cos(\pi\nu)\Gamma(\nu+\frac{1}{2})}a^\nu\int_0^\infty\cos(ay)\int_0^1 t^{-\frac{\nu+\mu+2}{2}-\lambda}(1-t)^\lambda(1+ty^2)^{\nu-\frac{1}{2}}\,dt\,dy. $$
The inner integral can be computed in terms of the hypergeometric function using \eqref{eq:IntFormulaHypergeom2}:
$$ = \frac{2^{-\nu-1}\sqrt{\pi}\Gamma(\lambda+1)\Gamma(-\lambda-\frac{\mu+\nu}{2})}{\cos(\pi\nu)\Gamma(\nu+\frac{1}{2})\Gamma(-\frac{\mu+\nu-2}{2})}a^\nu\int_0^\infty\cos(ay){_2F_1}(\tfrac{1}{2}-\nu,-\lambda-\tfrac{\mu+\nu}{2};-\tfrac{\mu+\nu-2}{2};-y^2)\,dy. $$
By \eqref{eq:IntCosHypergeometric} and Euler's reflection formula this equals the claimed formula.
\end{proof}

\providecommand{\bysame}{\leavevmode\hbox to3em{\hrulefill}\thinspace}
\providecommand{\href}[2]{#2}


\begin{thebibliography}{10}
	
	\bibitem{AAR99}
	George~E. Andrews, Richard Askey, and Ranjan Roy, \emph{Special functions},
	Encyclopedia of Mathematics and its Applications, vol.~71, Cambridge
	University Press, Cambridge, 1999.

	\bibitem{BR99}
	Joseph Bernstein and Andre Reznikov, \emph{Analytic continuation of representations and estimates of automorphic forms}, Ann. of Math. (2) \textbf{150} (1999), no.~1, 329--352.
	
	\bibitem{BR04}
	\bysame, \emph{Estimates of automorphic functions},
	Mosc. Math. J. \textbf{4} (2004), no.~1, 19--37.
	
	\bibitem{BR05}
	\bysame, \emph{Periods, subconvexity of {$L$}-functions and representation
		theory}, J. Differential Geom. \textbf{70} (2005), no.~1, 129--141.
	
	\bibitem{BR10}
	\bysame, \emph{Subconvexity bounds for triple {$L$}-functions and
		representation theory}, Ann. of Math. (2) \textbf{172} (2010), no.~3,
	1679--1718.
	
	\bibitem{Blo12}
	Valentin Blomer, \emph{Subconvexity for twisted {$L$}-functions on {${\rm
				GL}(3)$}}, Amer. J. Math. \textbf{134} (2012), no.~5, 1385--1421.
	
	\bibitem{BBM17}
	Valentin Blomer, Jack Buttcane, and P\'{e}ter Maga, \emph{Applications of the
		{K}uznetsov formula on {$\rm GL(3$}) {II}: the level aspect}, Math. Ann.
	\textbf{369} (2017), no.~1-2, 723--759.
	
	\bibitem{Bum88}
	Daniel Bump, \emph{Barnes' second lemma and its application to
		{R}ankin-{S}elberg convolutions}, Amer. J. Math. \textbf{110} (1988), no.~1,
	179--185.
	
	\bibitem{CS15}
	Fulin Chen and Binyong Sun, \emph{Uniqueness of {R}ankin--{S}elberg periods},
	Int. Math. Res. Not. IMRN (2015), no.~14, 5849--5873.
	
	\bibitem{EMOT54b}
	A.~Erd\'elyi, W.~Magnus, F.~Oberhettinger, and F.~G. Tricomi, \emph{Tables of
		integral transforms. {V}ol. {II}}, McGraw-Hill Book Company, Inc., New
	York-Toronto-London, 1954, Based, in part, on notes left by Harry Bateman.
	
	\bibitem{EKP02}
	Pavel Etingof, David Kazhdan, and Alexander Polishchuk, \emph{When is the
		{F}ourier transform of an elementary function elementary?}, Selecta Math.
	(N.S.) \textbf{8} (2002), no.~1, 27--66.
	
	\bibitem{Moe17}
	Jan Frahm, \emph{Symmetry breaking operators for strongly spherical
		reductive pairs}, to appear in Publ. Res. Inst. Math. Sci., available at
	\href{https://arxiv.org/abs/1705.06109}{arXiv:1705.06109}.
	
	\bibitem{FS18}
	Jan Frahm and Feng Su, \emph{Upper bounds for geodesic periods over rank one
		locally symmetric spaces}, Forum Math. \textbf{30} (2018), no.~5, 1065--1077.
	
	\bibitem{Gol06}
	Dorian Goldfeld, \emph{Automorphic forms and {$L$}-functions for the group
		{${\rm GL}(n,\mathbb{R})$}}, Cambridge Studies in Advanced Mathematics, vol.~99,
	Cambridge University Press, Cambridge, 2006, With an appendix by Kevin A.
	Broughan.
	
	\bibitem{GR07}
	I.~S. Gradshteyn and I.~M. Ryzhik, \emph{Table of integrals, series, and
		products}, eighth ed., Elsevier/Academic Press, Amsterdam, 2015.
	
	\bibitem{HM89}
	Jeff Hoffstein and M.~Ram Murty, \emph{{$L$}-series of automorphic forms on
		{${\rm GL}(3,{\bf R})$}}, Th\'{e}orie des nombres ({Q}uebec, {PQ}, 1987), de
	Gruyter, Berlin, 1989, pp.~398--408.
	
	\bibitem{IS13}
	Taku Ishii and Eric Stade, \emph{Archimedean zeta integrals on {${\rm
				GL}_n\times{\rm GL}_m$} and {${\rm SO}_{2n+1}\times{\rm GL}_m$}}, Manuscripta
	Math. \textbf{141} (2013), no.~3-4, 485--536.
	
	\bibitem{Jac72}
	Herv\'{e} Jacquet, \emph{Automorphic forms on {${\rm GL}(2)$}. {P}art {II}},
	Lecture Notes in Mathematics, Vol. 278, Springer-Verlag, Berlin-New York,
	1972.
	
	\bibitem{Kna01}
	Anthony~W. Knapp, \emph{Representation theory of semisimple groups}, Princeton
	Landmarks in Mathematics, Princeton University Press, Princeton, NJ, 2001, An
	overview based on examples, Reprint of the 1986 original.
	
	\bibitem{KS15}
	Toshiyuki Kobayashi and Birgit Speh, \emph{Symmetry breaking for
		representations of rank one orthogonal groups}, Mem. Amer. Math. Soc.
	\textbf{238} (2015), no.~1126.
	
	\bibitem{KS18}
	\bysame, \emph{Symmetry breaking for representations of rank one orthogonal
		groups {II}}, Lecture Notes in Mathematics, vol. 2234, Springer, Singapore,
	2018.
	
	\bibitem{KS19}
	\bysame, \emph{Distinguished representations of {${\rm SO}(n+1,1)\times{\rm
			SO}(n,1)$}, periods and branching laws}, Relative Trace Formulas, Simons Symposia,
			pages 291--319. Springer, 2021.
	
	\bibitem{Li11}
	Xiaoqing Li, \emph{Bounds for {${\rm GL}(3)\times {\rm GL}(2)$} {$L$}-functions
		and {${\rm GL}(3)$} {$L$}-functions}, Ann. of Math. (2) \textbf{173} (2011),
	no.~1, 301--336.
	
	\bibitem{LY02}
	Jianya Liu and Yangbo Ye, \emph{Subconvexity for {R}ankin-{S}elberg
		{$L$}-functions of {M}aass forms}, Geom. Funct. Anal. \textbf{12} (2002),
	no.~6, 1296--1323.
	
	\bibitem{MSY18}
	Mark McKee, Haiwei Sun, and Yangbo Ye, \emph{Improved subconvexity bounds for
		{$GL(2)\times GL(3)$} and {$GL(3)$} {$L$}-functions by weighted stationary
		phase}, Trans. Amer. Math. Soc. \textbf{370} (2018), no.~5, 3745--3769.
	
	\bibitem{MS12}
	Stephen~D. Miller and Wilfried Schmid, \emph{On the rapid decay of cuspidal
		automorphic forms}, Adv. Math. \textbf{231} (2012), no.~2, 940--964.
	
	\bibitem{MO17}
	Jan M\"{o}llers and Bent {\O}rsted, \emph{Estimates for the restriction of
		automorphic forms on hyperbolic manifolds to compact geodesic cycles}, Int.
	Math. Res. Not. IMRN (2017), no.~11, 3209--3236.
	
	\bibitem{Mun15}
	Ritabrata Munshi, \emph{The circle method and bounds for
		{$L$}-functions---{III}: {$t$}-aspect subconvexity for {$GL(3)$}
		{$L$}-functions}, J. Amer. Math. Soc. \textbf{28} (2015), no.~4, 913--938.
	
	\bibitem{Nel20}
	Paul D. Nelson, \emph{Spectral aspect subconvex bounds for $\mathrm{U}_{n+1}\times\mathrm{U}_n$},
		(2020), preprint, available at \href{https://arxiv.org/abs/2012.02187}{arXiv:2012.02187}.
	
	\bibitem{Rez08}
	Andre Reznikov, \emph{Rankin-{S}elberg without unfolding and bounds for
		spherical {F}ourier coefficients of {M}aass forms}, J. Amer. Math. Soc.
	\textbf{21} (2008), no.~2, 439--477.
	
	\bibitem{Sta90}
	Eric Stade, \emph{On explicit integral formulas for {${\rm GL}(n,{\bf
				R})$}-{W}hittaker functions}, Duke Math. J. \textbf{60} (1990), no.~2,
	313--362, With an appendix by Daniel Bump, Solomon Friedberg and Jeffrey
	Hoffstein.
	
	\bibitem{Sta93}
	\bysame, \emph{Hypergeometric series and {E}uler factors at infinity for
		{$L$}-functions on {${\rm GL}(3,\mathbb{R})\times{\rm GL}(3,\mathbb{R})$}}, Amer.
	J. Math. \textbf{115} (1993), no.~2, 371--387.
	
	\bibitem{Sta01}
	\bysame, \emph{Mellin transforms of {${\rm GL}(n,\mathbb{R})$} {W}hittaker
		functions}, Amer. J. Math. \textbf{123} (2001), no.~1, 121--161.
	
	\bibitem{Sta02}
	\bysame, \emph{Archimedean {$L$}-factors on {${\rm GL}(n)\times{\rm GL}(n)$}
		and generalized {B}arnes integrals}, Israel J. Math. \textbf{127} (2002),
	201--219.
	
	\bibitem{SZ12}
	Binyong Sun and Chen-Bo Zhu, \emph{Multiplicity one theorems: the {A}rchimedean
		case}, Ann. of Math. (2) \textbf{175} (2012), no.~1, 23--44.
	
	\bibitem{You11}
	Matthew~P. Young, \emph{The second moment of {${\rm GL}(3)\times{\rm GL}(2)$}
		{$L$}-functions integrated}, Adv. Math. \textbf{226} (2011), no.~4,
	3550--3578.
	
	\bibitem{You13}
	\bysame, \emph{The second moment of {$GL(3)\times GL(2)$} {$L$}-functions at special
		points}, Math. Ann. \textbf{356} (2013), no.~3, 1005--1028.
	
\end{thebibliography}
\end{document}